\theoremstyle{plain}
\newtheorem{theorem}{Theorem}
\newtheorem{lemma}{Lemma}
\newtheorem{corollary}{Corollary}
\newtheorem{proposition}{Proposition}
\theoremstyle{definition}
\newtheorem{definition}{Definition}
\theoremstyle{remark}
\newtheorem{remark}{Remark}
\numberwithin{equation}{section}  
\begin{document}
\title[Some ring-theoretic properties of $\mathcal{R}L_\tau$]
{Some ring-theoretic properties of the ring of   
 $\mathcal{R}L_\tau$}                               

\author{A. A. Estaji$^{1*}$}
\address{$^{1}$ Faculty of Mathematics and Computer Sciences, Hakim Sabzevari University, Sabzevar, Iran.}
\thanks{*Corresponding author}
\email{aaestaji@hsu.ac.ir}

\author{M. Abedi$^{2}$}
\address{
$^{2}$ Esfarayen University of Technology, Esfarayen, North Khorasan, Iran.}
\email{abedi@esfarayen.ac.ir}

\begin{abstract}
The aim of this article is to survey ring-theoretic properties of Kasch,  the regularity and the injectivity of the ring of real-continuous functions on a topoframe $L_{ \tau}$, i.e., $\mathcal{R}L_\tau$.
In order to study these properties, the concept of $P$-spaces and extremally disconnected spaces  are extend to topoframes.
For  a $P$- topoframe $L_{ \tau}$, the ring $\mathcal{R}L_\tau$ is  $\aleph_0$-Kasch ring.
$P$- topoframes are characterized in terms of ring-theoretic properties of the regularity and injectivity of the ring of real-continuous functions on a topoframe.
It follows from these characterizations that the ring $\mathcal{R}L_\tau$ is regular if and only if it is $\aleph_0$-selfinjective.
For a completely regular topoframe $L_\tau$, we show that
$\mathcal{R}L_\tau$ is a Bear ring if and only if it is a $CS$-ring if and only if $L_\tau$ is extremally disconnected
and also prove that it is selfinjective ring if and only if $L_{ \tau}$ is an extremally disconnected $P$-topoframe.
\end{abstract}


\subjclass[2010]{06D22, 06F25, 54C30, 16E50, 16D50, 54G05}

\keywords{Topoframe, Frame, Ring of all real-continuous functions on a topoframe, $P$- topoframe,
Extremally disconnected topoframe, Injective, 
$\aleph_0$-selfinjective,  Regular ring, Kasch ring}

\maketitle

\section{Introduction}
The principal tool to be used is {\it modified pointfree topology}, i.e., {\it topoframe},  first introduced in \cite{estaji2016ring} as follows.
 \begin{center}
A topoframe, denoted by $L_{ \tau}$,  is a pair $(L, \tau)$ consisting of a frame $L$ and a
subframe $\tau$ all of whose elements are complemented elements in $L$.
\end{center}
The elements of $\tau$ are called open elements in $L$ and $\tau$ and
the elements of $\tau^\prime=\{ t^\prime \mid t\in\tau\}$ are called closed elements in $L$.
Also in \cite{estaji2016ring}, authors introduced the concept of a $\tau$-{\it real-continuous function} on a frame $L$ (or a  {\it real-continuous function} on $L_{ \tau}$) and showed the set of all real-continuous functions on $L_{ \tau}$,
denoted by $\mathcal{R}L_\tau$, is an $f$-ring is actually a generalization of the ring $C(X)$ of all real-valued
continuous functions on a completely regular Hausdorff  space $X$.

Recall  from  \cite {Banaschewski(1997)2} that the frame $\mathcal L(\mathbb R)$ of reals is obtained by taking
the ordered pairs $(p , q)$ of rational numbers as generators.
Now, {\it the real-valued continuous functions} on $L$
are the homomorphisms $\mathcal L(\mathbb R)\rightarrow L$.
The ring $\mathcal RL$ of all frame homomorphisms from $\mathcal L(\mathbb R)$ to $L$, i.e.,
$\mathcal RL={\bf Frm}(\mathcal L(\mathbb R) , L)$, is an $f$-ring  (see \cite {Banaschewski(1997)2}.
For a topoframe $L_{ \tau}$, the ring $\mathcal{R}L_\tau$ is isomorphic to a sub-$f$-ring of $\mathcal{R}\tau$  (see \cite{estaji2016ring} for details).

Recall from \cite {KarimiEstajiZarghani} that a  {\it real-valued  function} on a frame $L$
is a homomorphism $f: \mathcal P(\mathbb R)\rightarrow L$ and
let $\mathcal{F}_{\mathcal P}L$ be the ring of all real-valued  functions on a frame $L$, i.e.,
$\mathcal{F}_{\mathcal P}L={\bf Frm}(\mathcal P(\mathbb R) , L)$.
For a topoframe $L_{ \tau}$,
it is proved in \cite{estaji2016ring} that  $\mathcal{R}L_\tau$ is a sub-$f$-ring of $\mathcal{F}_{\mathcal P}L$.

By a {\it reduced ring} we mean a ring without nonzero nilpotent elements.
In \cite{Karamzadeh1997, Karamzadeh1999}
for a reduced ring $A$, some internal conditions on $A$
are equivalent to self-injectivity ($\aleph_0$-selfinjective) of $A$ are provided.
Since  $\mathcal{R}L_\tau$ is always a reduced ring,
we can use these conditions to investigate the injectivity of the ring $\mathcal{R}L_\tau$.
Using these conditions, authors in \cite{EstajiKaramzadeh2003} shown that for a space $X$,
the ring $C(X )$ is $\aleph_0$-selfinjective if and only if $X$ is a $P$-space.
They proved  also that $C(X )$ is self-injective if and only if $X$ is an extremally disconnected $P$-space.
We are going to extend these results to the more general setting of modified pointfree topology.

This article is composed of three original parts which are  partly discrete but for introduction and preliminaries. In the following, these parts are clearly explained.

In Section 3, for a  element $a$ in $\tau$ with $a^\prime\in\tau^\prime$, we make
a idempotent element $f_a$ of $\mathcal{R}L_\tau$ such that $z(f_a)=a^\prime$
and calculate the multiplication $f f_a$ for a element $f\in \mathcal{R}L_\tau$ in Proposition \ref{inj}. This proposition enables us to
prove that  if $f\in \mathcal{R}L_\tau$ and $z(f)\in \tau$, then $f$ is a zerodivisor element and the principal ideal $(f)$ of $\mathcal{R}L_\tau$ is a nonessential ideal  (see Proposition \ref{inj15}).
Therefore, by this proposition, we can conclude that $\mathcal{R}L_\tau$ has no proper regular ideal (Corollary \ref{inj18}).
Finally,  the concept of $P$-spaces (or $P$-frames) is extend to topoframes and
in Theorem \ref {inj17}, it is proved that the ring  $\mathcal{R}L_\tau  $ is a $\aleph_0$-Kasch ring
whenever $L_\tau$ is a $P$-topoframe.

The regularity of the ring $\mathcal{R}L_\tau$ is examined in Section 4.
We prove in alone theorem of this section that $\mathcal{R}L_\tau$ is a regular ring if and only if $L_\tau$ is $P$-topoframe,
 see Theorem \ref{inj20}.

Eventually, the injectivity of the ring $\mathcal{R}L_\tau$ is discussed in the last section.
First, in the Theorem \ref {inj35}, we show that the topoframe $L_{\tau}$ is  a $P$-topoframe
if and only if  the ring   $\mathcal{R}L_{\tau}$ is $\aleph_0$-selfinjective. Also it is proved that
if  $L$ is an extremally disconnected frame and $L_{\tau}$ is a $P$-topoframe then $\mathcal{R}L_\tau$ is a selfinjective ring,
but the converse is not true, see Proposition \ref{inj40}.
Afterwards, we define  extremally disconnected topoframes and show for a topoframe $L_{\tau}$, $\tau$ is a extremally disconnected frame if and only if $L_{\tau}$ is extremally disconnected (Proposition \ref{ex}).
Also for a completely regular topoframe $L_{\tau}$, in Proposition \ref{inj40}, it is proved that
$L_{\tau}$ is extremally disconnected if and only if $\mathcal{R}L_\tau$ is a Bear ring if and only if it is a $CS$-ring.
Finally we characterize extremally disconnected $P$-topoframes
in terms of ring-theoretic properties of the ring $\mathcal RL_\tau$ and show
$L_{\tau}$ is an extremally disconnected $P$-topoframe
if and only if $\mathcal RL_\tau$ is a  self injective ring
if and only if $\mathcal{R}L_\tau$ is a Baer regular ring
if and only if $\mathcal{R}L_\tau$ is a continuous regular ring
if and only if $\mathcal{R}L_\tau$ is a complete regular ring,
whenever $L_{\tau}$ is a completely regular topoframe, see Theorem \ref{inj60}.

\section{Preliminaries}
A good reference to the basic terms and notations in frames is \cite{PicadoPultre}.
For undefined terms and notations see \cite{Banaschewski(1997)2} on pointfree functions
rings, and see \cite{GillmanJerison1976} on $C(X)$.
Also \cite{estaji2016ring, zarghani2017ring}  are  valuable  references on topoframes and the ring of real-continuous functions on a topoframe.
Here we  recall  a few facts about frames, topoframes and their rings of real-continuous functions that will be linked for our discussion.

‎A {\it frame} is a complete lattice $L$‎ ‎in which the distributive law‎
‎$$ a \wedge \bigvee S = \bigvee \{ a \wedge s‎: ‎s \in S \}$$‎
‎holds for all  $ a \in L$ and $ S \subseteq L$‎.
The top element and the bottom element of $L$ are denoted by $\top_L$ and $\bot_L$ respectively‎;
omitting the subscripts if no bewilderment may happen. Throughout this context L will denote a frame and
topoframe $(L , \tau)$ is denoted by $L_\tau$.
$\mathfrak O X$ is ‎the frame of open subsets of a topological space $X$‎.

‎A {\it frame homomorphism} (or {\it frame map}) is a map $f: L\rightarrow M$ between two frames which preserves finite‎ ‎meets‎, ‎including the top element‎, ‎and arbitrary joins‎, ‎including the bottom‎ ‎element‎.


The {\it pseudocomplement} of an element $x\in L$ is denoted by $x^*$.
General properties of pseudocomplement can be found in \cite{PicadoPultre}. Here  we emphasize some of them.
\begin{enumerate}
\item $a\leq a^{**}$ and if $a\leq b$, then $b^*\leq a^*$.
\item ${(\bigvee_{i\in I} a_i)^*=\bigwedge_{i\in I} a_i^*}$, the first De Morgan law.
\item $(a\wedge b)^{**}=a^{**}\wedge b^{**}$.
\end{enumerate}
An element $x$ of $L$ is {\it complemented} whenever $x\vee x^*=\top$ and in this case $a^{\prime}=a^*$.

The homomorphism $\eta:\mathcal L(\mathbb R)\rightarrow \mathcal O\mathbb R $ given by
$(p,q)\mapsto\rrbracket p, q \llbracket$
is an isomorphism, where
$\rrbracket p, q \llbracket \coloneqq  \{x\in \mathbb{R}: p<x<q \}$.
Now, a $ \tau$-{\it
real-continuous function} on $L$ (or a {\it real-continuous function} on $L_{ \tau}$) is a frame-homomorphism $f :
\mathcal{P}(\mathbb{R}) \rightarrow L$ such that for all $p,q\in \mathbb Q$, $f(\rrbracket p, q \llbracket)\in \tau$.
The ring $ \mathcal{R}L_\tau$ has as its elements real-continuous functions on $L_{ \tau}$ with operations determined by the
operations of $\mathbb R$ viewed as an $f$-ring as follows  (see \cite {KarimiEstajiZarghani}).

For $\diamond \in \{+,.,\wedge,\vee\}$ and  $f, g \in \mathcal{R}L_\tau$,
$$(f\diamond g)(X)=\bigvee \{f(Y)\wedge g(Z) \mid Y\diamond Z\subseteq X\},$$
where  $Y\diamond Z=\{ y\diamond z \mid y\in Y, z\in Z\}$.

For any $f\in \mathcal{R}L_\tau$ and $X\subseteq \mathbb R$, $(-f)(X)=f(-X)$ and
for any $r\in\mathbb R$,  the constant function $\mathbf{r}$ is the member of $ \mathcal{R}L_\tau $ given by
\[
 \mathbf{r}(X)=\left\{
\begin{array}{ll}
 \top& \mbox{if $r\in  X$},\\[1mm]
\bot&  \mbox{if $r\not\in  X$}.
\end{array} \right.
\]
 Also for any $f, g \in \mathcal{R}L_\tau$, we have
\begin{enumerate}
\item $(f\diamond g)(X)=\bigvee \{f(\{y\})\wedge g(\{z\}) \mid y\diamond z\in X\}$, for every $X\in \mathcal P(\mathbb R)$.

\item $f=g$ if and only if $f(\{r\})=g(\{r\})$ for every $r\in\mathbb R$.

\end{enumerate}

An important link between a topoframe $L_{ \tau}$ and its ring of $\tau$-real-continuous function on $L_{ \tau}$
given by the {\it zero map} $z: \mathcal{R}L_\tau\rightarrow L$ taking
every $f\in\mathcal{R}L_\tau$ to $z(f)=f(\{0\})$.
The  zero map has several important properties  (see \cite{ZarghaniKarimi(2016)}) that we emphasize some of them.
For every $f,g\in \mathcal{R}L_\tau$, we have.
 \begin{enumerate}
\item For every $n\in \mathbb{N}$, $ z(f) =z(-f) =z(|f|) =z(f^n)$,
\item $ z(fg) = z(f) \vee z(g) $,
\item $  z(f+g) \geq z(f) \wedge z(g)$,
\item   $ z(f+g) = z(f) \wedge z(g) $, while  $ f,g \geq \mathbf{0} $,
\item  $ z(f)= \top  $ if and only if $ f= \mathbf{0} $,
\item $ z(f) = \bot$ if and only if $f$ is a unit element of $ \mathcal{R}L_\tau$.
 \end{enumerate}

A {\it zero element} of $L_\tau$ is an element of the form $z(f)$ for some $f \in \mathcal{R}L_\tau$.
The {\it zero part} of $L_\tau$, denoted by $Z({\mathcal{R}L_\tau})$ or $Z(L_\tau)$,
is the regular sub-co-$\sigma$-frame consisting of all the zero elements of $L_\tau$.
Also a {\it cozero-element} of $L_\tau$ is defined by
$coz(f)\coloneqq f(\mathbb R\setminus \{0\})$
for some $f\in \mathcal{R}L_\tau$. Obviously,     $z(f)=(coz(f))'$.
Note that $z(f)$ and $coz(f)$ are a closed element and an open element in $L$, respectively.
General properties of zero elements and cozero-elements of topoframes can be found in \cite{EstajiKarimiZarghani(2016)2}.
Here we highlight the following.

(1) If $\{f_n\}_{n\in \mathbb N}\subseteq \mathcal{R}L_\tau$, then there is a $f\in  \mathcal{R}L_\tau$ such that
\begin{center}
$\bigvee_{n\in \mathbb N}coz(f_n)=coz(f)$ and
$\bigwedge_{n\in \mathbb N}z(f_n)=z(f)$.
\end{center}
(2) For every $f\in \mathcal{R}L_\tau$, $coz(f)=coz(f\mid_{\mathfrak O\mathbb R})\in\tau$.

\section{Kasch of $\mathcal{R}L_{\tau}$}
An ideal $I$ of a ring $A$, throughout, by the term “ring” we mean a commutative
ring with identity, is called {\it essential} in $A$  if $I\cap J\not =(0)$ holds for
every non-zero  ideal $J$ of $A$.

Let  $\alpha$ be a cardinal number.
An ideal $I$ in a ring $A$ is called $\alpha$-{\it generated}
 if it has got a generating set $G$ such that $\vert G\vert\leq\alpha$.
The least element in the set of cardinal numbers of all generating set of $I$ is denoted by $gen(I)$.
A ring $A$ is said to be an $\alpha$-{\it Kasch ring} if for any proper ideal $I$ with $gen(I)< \alpha$,
then $I$ is a non-essential ideal  (see \cite{estaji2010kasch}).

We need the following proposition which will play a central role in the development of this article,
but we omit its proof for it is similar to the proof of Proposition 3.1 in \cite{EstajiHashemiEstaji2017}.
\begin{proposition}\label {inj}
Let $L_{ \tau}$ be a topoframe and $a$ be an element of  $\tau$ such that $a'\in\tau$.
Then $f_a: \mathcal P(\mathbb R)\rightarrow L$ defined by
\[
f_a(X)=\left\{
\begin{array}{rl}
\top&  \,\mbox{if\, $0, 1\in  X$}\\[1mm]
a'&  \,\mbox{ if \,$0\in  X$ and $ 1\not \in  X$}\\[1mm]
a&  \,\mbox{ if\, $0\not \in  X$ and $ 1\in  X$}\\[1mm]
\bot&  \,\mbox{ if\, $0\not \in  X$ and $ 1\not \in  X$}, \\[1mm]
\end{array} \right.
\]
is a real-continuous functions on $L_{ \tau}$ and  the following statements hold.
\begin{enumerate}
\item
 $f_a^2=f_a$, $z(f_a)=a^\prime$ and $f_a+ f_{a'}={\bf1}$.
\item For every $f\in \mathcal{R}L_{\tau}$ and $X\in\mathcal P(\mathbb R)$,
\[
 ff_a (X)=\left\{
\begin{array}{ll}
 a'\vee f(X)& \mbox{if $0\in  X$},\\[1mm]
a\wedge f(X)&  \mbox{if $0\not\in  X$}.
\end{array} \right.
\]
\item For each pair of complemented elements $a, b \in L$ such that $a, a',b, b'\in \tau$,  $f_af_b=f_{a\wedge b}$.
\end{enumerate}
\end{proposition}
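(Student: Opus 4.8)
The plan is to proceed in two stages: first confirm that $f_a$ genuinely belongs to $\mathcal{R}L_\tau$, and then read off the three listed identities, most of which will fall out of item~(2). For the first stage, the key observation is that $f_a$ can be rewritten in terms of the constant functions $\mathbf{0}$ and $\mathbf{1}$: for every $X\in\mathcal P(\mathbb R)$ one should have
\[
f_a(X)=\bigl(a'\wedge \mathbf{0}(X)\bigr)\vee\bigl(a\wedge \mathbf{1}(X)\bigr),
\]
which is checked by running through the four cases in the definition, using $a\vee a'=\top$ in the case $0,1\in X$. Since $\mathbf{0}$ and $\mathbf{1}$ are frame homomorphisms, preservation of arbitrary joins by $f_a$ then follows from frame distributivity: pulling $a'$ and $a$ through the joins and regrouping gives $f_a(\bigcup_i X_i)=\bigvee_i f_a(X_i)$, while preservation of $\emptyset$ and $\mathbb R$ is immediate.

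The one genuinely load-bearing step, and the main obstacle, is preservation of binary meets: expanding $f_a(X)\wedge f_a(Y)$ by distributivity produces four terms, and the two mixed terms each carry a factor $a\wedge a'=\bot$ and therefore vanish, leaving exactly $\bigl(a'\wedge\mathbf{0}(X\cap Y)\bigr)\vee\bigl(a\wedge\mathbf{1}(X\cap Y)\bigr)=f_a(X\cap Y)$. This cancellation is precisely where complementedness of $a$ is used; everything else in this stage is bookkeeping. Finally $f_a$ is $\tau$-continuous because $f_a(\rrbracket p,q\llbracket)$ is always one of $\top,a',a,\bot$, all of which lie in $\tau$ by hypothesis and the subframe axioms, so $f_a\in\mathcal{R}L_\tau$.

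The longest computation is item~(2), which I would prove directly from the singleton multiplication formula $(ff_a)(X)=\bigvee\{f(\{y\})\wedge f_a(\{z\}) : yz\in X\}$. Since $f_a(\{z\})=\bot$ unless $z\in\{0,1\}$, only $z=0$ (contributing $a'$) and $z=1$ (contributing $a$) survive. The $z=0$ branch forces $y\cdot 0=0$, so it contributes $a'\wedge\bigvee_y f(\{y\})=a'$ exactly when $0\in X$ and $\bot$ otherwise; the $z=1$ branch forces $y\cdot 1=y$, contributing $a\wedge f(X)$. Hence $(ff_a)(X)=a'\vee(a\wedge f(X))$ when $0\in X$ and $a\wedge f(X)$ when $0\notin X$, and the frame identity $a'\vee(a\wedge f(X))=(a'\vee a)\wedge(a'\vee f(X))=a'\vee f(X)$ closes the first case.

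With~(2) in hand the remaining items are short. For~(1), taking $f=f_a$ in~(2) and checking the four cases for $X$ gives $f_a^2=f_a$; $z(f_a)=f_a(\{0\})=a'$ is immediate from the definition; and $f_a+f_{a'}=\mathbf{1}$ follows from the additive analogue of the singleton formula, where the surviving pairs have sums $0,1,1,2$, the cross terms again vanish through $a\wedge a'=\bot$, and a value of $\top$ is obtained precisely when $1\in X$. For~(3), I would first note that $a\wedge b$ is complemented with $(a\wedge b)'=a'\vee b'$, so $f_{a\wedge b}$ is defined and $a\wedge b,(a\wedge b)'\in\tau$; then applying~(2) with $f=f_b$ and comparing the four resulting cases against the definition of $f_{a\wedge b}$ (using $(a\wedge b)'=a'\vee b'$ in the case $0\in X$, $1\notin X$) yields $f_af_b=f_{a\wedge b}$.
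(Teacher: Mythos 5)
Your argument is correct, but there is an important point of context: the paper itself contains no proof of this proposition --- it is explicitly omitted, with a pointer to the proof of Proposition 3.1 in \cite{EstajiHashemiEstaji2017} --- so there is no in-paper argument to compare against, and your proof has to stand on its own. It does. Your decomposition $f_a(X)=\bigl(a'\wedge\mathbf{0}(X)\bigr)\vee\bigl(a\wedge\mathbf{1}(X)\bigr)$ is a clean device: it reduces the frame-homomorphism check to distributivity together with the two identities $a\vee a'=\top$ and $a\wedge a'=\bot$, whereas the proof such statements usually receive (and presumably the one in the cited reference) is a direct four-case bookkeeping verification of meet and join preservation straight from the definition of $f_a$; what your route buys is that the ``mixed terms vanish'' cancellation is isolated as the single place where complementedness of $a$ enters. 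Your derivations of (1)--(3) from the singleton product formula $(ff_a)(X)=\bigvee\{f(\{y\})\wedge f_a(\{z\}) : yz\in X\}$ (which the paper records in the preliminaries, and which you may legitimately apply to $f_a$ once membership in $\mathcal{R}L_\tau$ is established) are the expected ones. Only two small points deserve flagging, and neither is a gap: the simplification $a'\vee\bigl(a\wedge f(X)\bigr)=a'\vee f(X)$ uses the dual distributive law, which is valid because every frame is a distributive lattice; and in item (3) you actually compute $f_bf_a$ rather than $f_af_b$, which is harmless since the multiplication of $\mathcal{R}L_\tau$ is commutative.
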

\begin{lemma} \label{inj10}
For $\{f_{\lambda}\}_{\lambda\in \Lambda}\subseteq  \mathcal{R}L_{\tau}$,
the following statements hold.
\begin{enumerate}
\item[{\rm (1)}] $(\bigvee_{\lambda\in \Lambda}coz(f_{\lambda}))'=
\bigwedge_{\lambda\in \Lambda}z(f_{\lambda})$.
\item[{\rm (2)}]
If $a=\bigvee_{\lambda\in \Lambda}coz(f_{\lambda})$ and $a'\in \tau$,
then $f_{\lambda}f^n_{a}=f_{\lambda}$ and $f_{\lambda}f^n_{a'}={\bf0}$.
\end{enumerate}
\end{lemma}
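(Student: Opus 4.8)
The plan is to treat the two parts separately: part~(1) is a direct application of the first De Morgan law, while part~(2) combines the explicit product formula of Proposition~\ref{inj} with a short distributivity argument.

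For part~(1), I would first record that each $coz(f_\lambda)$ lies in $\tau$ and is therefore complemented in $L$, so that $(coz(f_\lambda))' = (coz(f_\lambda))^*$ and $z(f_\lambda) = (coz(f_\lambda))'$. Because $\tau$ is a subframe of $L$ it is closed under arbitrary joins, whence $\bigvee_{\lambda} coz(f_\lambda) \in \tau$ is again complemented and its complement coincides with its pseudocomplement. The first De Morgan law then yields
\[
\Bigl(\bigvee_{\lambda} coz(f_\lambda)\Bigr)' = \Bigl(\bigvee_{\lambda} coz(f_\lambda)\Bigr)^* = \bigwedge_{\lambda} (coz(f_\lambda))^* = \bigwedge_{\lambda} (coz(f_\lambda))' = \bigwedge_{\lambda} z(f_\lambda),
\]
which is the assertion.

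For part~(2), since $a \in \tau$ (as a join of cozero elements) and $a' \in \tau$ by hypothesis, Proposition~\ref{inj} applies, and $f_a, f_{a'}$ are idempotent; hence $f_a^n = f_a$ and $f_{a'}^n = f_{a'}$ for every $n$, and it suffices to establish $f_\lambda f_a = f_\lambda$ and $f_\lambda f_{a'} = \mathbf{0}$. For the first identity I would verify $(f_\lambda f_a)(X) = f_\lambda(X)$ pointwise using the formula in Proposition~\ref{inj}(2), splitting on whether $0 \in X$. The two cases rest on the inequality $coz(f_\lambda) \leq a$ (immediate from $a = \bigvee_{\mu} coz(f_\mu)$) and its complemented counterpart $a' \leq (coz(f_\lambda))' = z(f_\lambda)$. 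When $0 \in X$, monotonicity of $f_\lambda$ gives $a' \leq z(f_\lambda) = f_\lambda(\{0\}) \leq f_\lambda(X)$, so $a' \vee f_\lambda(X) = f_\lambda(X)$; when $0 \notin X$, monotonicity gives $f_\lambda(X) \leq f_\lambda(\mathbb{R}\setminus\{0\}) = coz(f_\lambda) \leq a$, so $a \wedge f_\lambda(X) = f_\lambda(X)$. In either case the product agrees with $f_\lambda$, so $f_\lambda f_a = f_\lambda$.

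The second identity I would obtain without a further pointwise computation: multiplying the relation $f_a + f_{a'} = \mathbf{1}$ of Proposition~\ref{inj}(1) by $f_\lambda$ and distributing gives $f_\lambda = f_\lambda f_a + f_\lambda f_{a'} = f_\lambda + f_\lambda f_{a'}$, so that $f_\lambda f_{a'} = \mathbf{0}$. The one point deserving genuine care is the complementedness of $\bigvee_{\lambda} coz(f_\lambda)$, which is what permits replacing $'$ by $^{*}$ in part~(1) and legitimizes applying Proposition~\ref{inj} to $a$ in part~(2); this is precisely where the defining feature of a topoframe, namely that every element of $\tau$ is complemented in $L$, enters. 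Everything else reduces to monotonicity and the elementary order identities above, so I do not anticipate a serious obstacle.
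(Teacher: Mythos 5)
Your proposal is correct and follows essentially the same route as the paper: part (1) via the identification of complement with pseudocomplement plus the first De Morgan law, and part (2) via a pointwise case analysis on $0\in X$ using the product formula of Proposition~\ref{inj}(2) together with $coz(f_\lambda)\leq a$ and $a'\leq z(f_\lambda)$. Your only deviation is deriving $f_\lambda f_{a'}=\mathbf{0}$ algebraically from $f_a+f_{a'}=\mathbf{1}$ and the already-proved $f_\lambda f_a=f_\lambda$, where the paper instead does a second pointwise computation; this is a harmless (and slightly cleaner) shortcut, and your explicit appeal to idempotency to reduce $f_a^n$ to $f_a$ is a detail the paper leaves implicit.
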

\begin{proof}
${\rm (1)}$ By the first De Morgan law, we have.
\[
(\bigvee_{\lambda\in \Lambda}coz(f_{\lambda}))'=
(\bigvee_{\lambda\in \Lambda}coz(f_{\lambda}))^*=
\bigwedge_{\lambda\in \Lambda}(coz(f_{\lambda})^*=
\bigwedge_{\lambda\in \Lambda}(coz(f_{\lambda}))'=
\bigwedge_{\lambda\in \Lambda}z(f_{\lambda}).
\]

${\rm (2)}$
Consider
 $\lambda\in \Lambda$ and $X\subseteq \mathbb R$.
 If $0\not\in X$,
then $f_{\lambda}(X)\leq coz(f_{\lambda})\leq a$, we can conclude that
\[
f_{\lambda}f_{a}(X)=a\wedge f_{\lambda}(X)=f_{\lambda}(X)
\]
 and
 \[
 f_{\lambda}f_{a'}(X)=a'\wedge f_{\lambda}(X)=\bot.
 \]
Also if $0\in X$,
then  $f_{\lambda}(X)=f_{\lambda}(X)\vee a'$, we conclude that
\[
f_{\lambda}f_{a}(X)=a'\vee f_{\lambda}(X)=f_{\lambda}(X)
\]
 and
\[
f_{\lambda}f_{a'}(X)=a\vee f_{\lambda}(X)=a\vee (f_{\lambda}(X)\vee a')=\top.
\]
Hence $f_{\lambda}f_{a}=f_{\lambda}$ and $f_{\lambda}f_{a'}={\bf0}$.
\end{proof}

\begin{proposition} \label{inj15}
 The following statements hold for every $f\in \mathcal{R}L_{\tau}$.
 \begin{enumerate}
\item[{\rm (1)}]
For every $n\in \mathbb N$,
if $z(f)\in\tau$, then
$ff^n_{coz(f)}=f$ and $ff^n_{z(f)}={\bf0}$.
\item[{\rm (2)}]
If $f$ is a nonunit element in $\mathcal{R}L_{\tau}$ and $z(f)\in\tau$, then $f$ is a zerodivisor.
\item[{\rm (3)}]
If $(f)$ is a proper ideal  in $\mathcal{R}L_{\tau}$ and $z(f)\in\tau$,
then $(f)$ is a non-essential ideal.
\end{enumerate}
\end{proposition}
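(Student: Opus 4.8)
The plan is to derive all three parts from the single-function instance of Lemma \ref{inj10} together with the idempotent apparatus of Proposition \ref{inj}, so that essentially no computation beyond these is needed. For part (1), I would apply Lemma \ref{inj10}(2) to the one-element family consisting of $f$ alone, for which $\bigvee_{\lambda}coz(f_{\lambda})=coz(f)$. The side condition ``$a'\in\tau$'' of that lemma becomes $(coz(f))'=z(f)\in\tau$, which is precisely the standing hypothesis, while $coz(f)\in\tau$ holds automatically; hence both $f_{coz(f)}$ and $f_{z(f)}$ are legitimately defined through Proposition \ref{inj}, since each of $coz(f)$ and $z(f)$ lies in $\tau$ together with its complement (using $z(f)=(coz(f))'$). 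With $a=coz(f)$ and $a'=z(f)$, the two conclusions $f_{\lambda}f_{a}^{n}=f_{\lambda}$ and $f_{\lambda}f_{a'}^{n}=\mathbf{0}$ read verbatim as $ff_{coz(f)}^{n}=f$ and $ff_{z(f)}^{n}=\mathbf{0}$.

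For part (2), I would set $n=1$ in part (1) to obtain $ff_{z(f)}=\mathbf{0}$, so it remains only to see that $f_{z(f)}\neq\mathbf{0}$. Using $z(f_{a})=a'$ from Proposition \ref{inj}(1), I compute $z(f_{z(f)})=(z(f))'=coz(f)$; were $f_{z(f)}=\mathbf{0}$, the characterization $f=\mathbf{0}\Leftrightarrow z(f)=\top$ would force $coz(f)=\top$, hence $z(f)=\bot$, hence $f$ a unit, contrary to hypothesis. Thus $f_{z(f)}$ is a nonzero annihilator of $f$, and $f$ is a zerodivisor.

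For part (3), I would propose $J=(f_{z(f)})$ as a nonessential companion to $(f)$. Since $(f)$ is proper, $f$ is a nonunit, so the computation in part (2) gives $f_{z(f)}\neq\mathbf{0}$ and $J\neq(0)$. The crux is $(f)\cap J=(0)$, and here I would exploit that $f_{coz(f)}$ and $f_{z(f)}$ are complementary idempotents: Proposition \ref{inj}(1) yields $f_{coz(f)}^{2}=f_{coz(f)}$, $f_{z(f)}^{2}=f_{z(f)}$ and $f_{coz(f)}+f_{z(f)}=\mathbf{1}$, the last of which forces $f_{coz(f)}f_{z(f)}=\mathbf{0}$. Part (1) with $n=1$ gives $f=ff_{coz(f)}$, so $(f)\subseteq(f_{coz(f)})$, and the standard splitting of a commutative ring by complementary idempotents yields $(f_{coz(f)})\cap(f_{z(f)})=(0)$: any common element $x$ satisfies $x=f_{coz(f)}x$ on one side, while $f_{coz(f)}x=f_{coz(f)}f_{z(f)}y=\mathbf{0}$ on the other, whence $x=\mathbf{0}$. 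Combining, $(f)\cap J\subseteq(f_{coz(f)})\cap(f_{z(f)})=(0)$.

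The only genuinely delicate point I anticipate lies in part (3): recognizing $f_{coz(f)}$ and $f_{z(f)}$ as complementary idempotents and verifying $f\in(f_{coz(f)})$. Once these are secured, the disjointness of the two idempotent summands is purely formal, and parts (1) and (2) amount to little more than readings of results already in hand.
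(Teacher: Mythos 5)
Your proposal is correct and follows the paper's own route: part (1) is exactly the specialization of Lemma \ref{inj10}(2) to the single function $f$, part (2) produces the nonzero annihilator $f_{z(f)}$ just as the paper does, and part (3) builds the disjoint ideal $(f_{z(f)})$ from that annihilator. The only difference is that where the paper dismisses (3) as ``trivial by (2),'' you make the disjointness $(f)\cap(f_{z(f)})=(0)$ explicit via the complementary idempotents $f_{coz(f)}+f_{z(f)}=\mathbf{1}$ (one could equally invoke reducedness of $\mathcal{R}L_{\tau}$), which is a faithful filling-in of the paper's omitted step rather than a different argument.
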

\begin{proof}
${\rm (1)}$
 By Lemma \ref{inj10}, it is trivial.

${\rm (2)}$
Let ${\bf0}\not= f$ be a nonunit element in $\mathcal{R}L_{\tau}$.
Then $coz(f)\not=\bot$ and $z(f)\not=\top$,
we conclude from the statement  ${\rm (1)}$ that $ ff_{z(f)}={\bf0}$, where $f_{z(f)}\not={\bf0}$.

${\rm (3)}$
It is trivial by the statement  (2).
\end{proof}
\begin{corollary}
If $\theta : \mathcal{R}L_{\tau}\longrightarrow \mathcal{R}L_{\tau}$
is a $\mathcal{R}L_{\tau}$-monomorphism and $z(\theta(1))\in\tau$, then
 $\theta$ is an isomorphism.
\end{corollary}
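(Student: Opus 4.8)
The plan is to exploit the fact that an $\mathcal{R}L_\tau$-module endomorphism of the ring $\mathcal{R}L_\tau$ is simply multiplication by a fixed element, and then to use Proposition \ref{inj15}(2) to force that element to be a unit. Write $R := \mathcal{R}L_\tau$ and put $g := \theta(\mathbf{1})$. Since $\theta$ is $R$-linear, for every $f \in R$ we have $\theta(f) = \theta(f \cdot \mathbf{1}) = f\,\theta(\mathbf{1}) = fg$, so $\theta$ is nothing but multiplication by $g$.

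First I would translate the injectivity of $\theta$ into a statement about zerodivisors. The kernel of $\theta$ is the annihilator $\{f \in R : fg = \mathbf{0}\}$, and since $\theta$ is a monomorphism this annihilator is trivial; equivalently, $g$ is not a zerodivisor. In particular $g \neq \mathbf{0}$, for otherwise $\theta$ would be the zero map and hence not injective, as $\mathbf{1} \neq \mathbf{0}$.

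Next I would bring in the hypothesis $z(\theta(\mathbf{1})) = z(g) \in \tau$. Suppose, towards a contradiction, that $g$ is not a unit of $R$. Then $g$ is a nonzero nonunit element with $z(g) \in \tau$, so Proposition \ref{inj15}(2) guarantees that $g$ is a zerodivisor, contradicting the previous step. Hence $g = \theta(\mathbf{1})$ must be a unit of $R$.

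Finally, once $g$ is known to be a unit, multiplication by $g$ is bijective, its inverse being multiplication by $g^{-1}$, which is again $R$-linear. Thus $\theta$ is surjective as well as injective, and $\theta^{-1}$ is an $\mathcal{R}L_\tau$-module homomorphism, so $\theta$ is an isomorphism. The only genuine content of the argument is the reduction of injectivity to the non-zerodivisor property of $\theta(\mathbf{1})$; the rest is a direct appeal to Proposition \ref{inj15}(2), and I anticipate no serious obstacle beyond making this module-theoretic reduction explicit.
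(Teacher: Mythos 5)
Your proposal is correct and follows essentially the same route as the paper's own proof: identify $\theta$ with multiplication by $\theta(\mathbf{1})$, use injectivity to see that $\theta(\mathbf{1})$ is a non-zerodivisor, invoke Proposition \ref{inj15}(2) in contrapositive form to conclude that $\theta(\mathbf{1})$ is a unit, and deduce surjectivity. Your explicit handling of the case $\theta(\mathbf{1})=\mathbf{0}$ is a small but welcome refinement, since Proposition \ref{inj15}(2) really concerns nonzero nonunits.
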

\begin{proof}
Since  $\theta(g)=g\theta(1)$ for all $g\in \mathcal{R}L_{\tau}$,
we conclude that $\theta(1)$ is
non-zero-devisor. It follows, by Proposition \ref{inj15}${\rm (2)}$, that  $\theta(1)$ is a unit, i.e., there is a
$h\in \mathcal{R}L_{\tau}$ such that
$\theta (h)=h\theta (1)=1$. Thus $\theta(fh)=f$,
for every $ f\in \mathcal{R}L_{\tau}$.
Consequently $\theta$ is also an epimorphism and  the proof is complete.
\end{proof}
\begin{corollary} \label{inj16}
Every  countably generated ideal $I$ in $\mathcal{R}L_{\tau}$
with $$\bot \not =(\bigvee_{f\in I}coz(f))'\in\tau$$ is in a non-essential principal ideal.
\end{corollary}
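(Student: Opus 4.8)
The plan is to exhibit the desired principal ideal explicitly: it will be the one generated by the idempotent $f_a$ attached, via Proposition \ref{inj}, to the element $a:=\bigvee_{f\in I}coz(f)$. So the first thing I would check is that $f_a$ is actually available. Each $coz(f)$ is an open element, i.e. lies in $\tau$, and since every member of $I$ is an $\mathcal{R}L_\tau$-combination of a countable generating set $\{f_n\}$, one has $a=\bigvee_{n}coz(f_n)$; as $\tau$ is a subframe of $L$ it is closed under arbitrary joins, so $a\in\tau$. Together with the standing hypothesis $a'\in\tau$ this lets me apply Proposition \ref{inj} to obtain the idempotent $f_a\in\mathcal{R}L_\tau$ with $f_a^2=f_a$ and $z(f_a)=a'$.

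The heart of the argument is the containment $I\subseteq(f_a)$, which I would get from the single identity $ff_a=f$ for every $f\in I$ (so that $f=ff_a\in(f_a)$). Fix $f\in I$; then $coz(f)\le a$, hence $z(f)=(coz(f))'\ge a'$. Feeding this into the formula of Proposition \ref{inj}(2): for $X$ with $0\notin X$ one has $f(X)\le coz(f)\le a$, so $ff_a(X)=a\wedge f(X)=f(X)$; and for $X$ with $0\in X$ one has $f(X)\ge f(\{0\})=z(f)\ge a'$, so $ff_a(X)=a'\vee f(X)=f(X)$. This is exactly the computation already recorded for the generators in Lemma \ref{inj10}(2); the only new observation is that it runs verbatim for an arbitrary $f\in I$ once we know $coz(f)\le a$. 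Hence $I\subseteq(f_a)$.

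It then remains to see that $(f_a)$ is the right kind of ideal. Since $z(f_a)=a'\neq\bot$, the zero-map dictionary (the property that $z(h)=\bot$ iff $h$ is a unit) shows $f_a$ is a nonunit, so $(f_a)$ is a proper ideal; and because $z(f_a)=a'\in\tau$, Proposition \ref{inj15}(3) applies directly and yields that $(f_a)$ is non-essential. Combining this with the previous step shows that $I$ lies in the non-essential principal ideal $(f_a)$, which is the assertion.

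The step I expect to be the genuine obstacle is the choice of generator, not any of the computations. The naive route is to use the countable-cozero property to pick some $g$ with $coz(g)=a$ and attempt $I\subseteq(g)$; but membership $f_n\in(g)$ need not hold, since there is no reason for the formal quotient $f_n/g$ to be realized inside $\mathcal{R}L_\tau$, and that approach stalls. The idempotent $f_a$ circumvents this precisely because it acts as an identity on every element whose cozero lies below $a$, reducing the containment to the trivial identity $f=ff_a$. Everything else is bookkeeping with the zero map together with Propositions \ref{inj} and \ref{inj15}. I would also note that countability of the generating set is not actually used in this argument: all that is required is $a\in\tau$ and $a'\in\tau$.
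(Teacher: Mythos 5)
Your proof is correct, and its core mechanism is the same as the paper's: both arguments hinge on the idempotent $f_a$ attached to $a=\bigvee_{f\in I}coz(f)$, the identity $ff_a=f$ for elements whose cozero lies below $a$ (the computation of Lemma \ref{inj10}), and Proposition \ref{inj15} for the non-essentiality of $(f_a)$. The one genuine divergence is at the start: the paper first invokes the countable-cozero property from the preliminaries to produce a single $g\in\mathcal{R}L_\tau$ with $coz(g)=\bigvee_{n}coz(f_n)$ and then works with the idempotent $f_{coz(g)}$ --- this detour is precisely where countability of the generating set enters --- whereas you apply Proposition \ref{inj} directly to the join $a$, which lies in $\tau$ simply because $\tau$ is a subframe closed under arbitrary joins. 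As you observe, this makes countability superfluous: your argument proves the conclusion for \emph{any} ideal $I$ with $\bot\neq(\bigvee_{f\in I}coz(f))'\in\tau$, a mild strengthening of the stated corollary. A second, immaterial difference: the paper verifies $f_nf_a=f_n$ only on the generators (via Lemma \ref{inj10}) and then uses that these generate $I$, while you run the same computation verbatim for every $f\in I$; either route suffices, and you also spell out the properness of $(f_a)$ (from $z(f_a)=a'\neq\bot$ and the unit criterion for the zero map), which the paper leaves implicit.
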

\begin{proof}
Let $I=(f_1, f_2,\ldots )$ be a countably generated ideal in
$\mathcal{R}L_{\tau}$ such that
\[
\bigvee_{n\in \mathbb N}coz(f_n)=\bigvee_{f\in I}coz(f)\not =\top.
\]
Then, there is an element $f\in \mathcal{R}L_{\tau}$ such that
$\bigvee_{n\in \mathbb N}coz(f_n)=coz(f),$
which follows from Proposition \ref{inj15}  that $(f_{coz(f)})$ is a non-essential ideal.
For every $n\in \mathbb N$, by Lemma  \ref{inj10},
$f_n=f_nf_{coz(f)}\in (f_{coz(f)})$.
Hence $I\subseteq  (f_{coz(f)})$  and the proof is complete.
 \end{proof}
Let $A$ be a ring and $x \in A$. Then

(a) $x$ is called a regular element in $A$ if $xy = 0$ and $y \in A$ implies
$x = 0$, and

(b) an ideal $I$ of $A$ is called regular if it contains a regular element in $A$.

\begin{corollary} \label{inj18}
 $\mathcal{R}L_{\tau}$ has no proper non-zero regular ideal.
 \end{corollary}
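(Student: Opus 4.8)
The plan is to prove the statement at the level of elements: I would show that every regular element of $\mathcal{R}L_\tau$ is a unit, whence any ideal containing one is all of $\mathcal{R}L_\tau$ and so cannot be proper. Note that $\mathbf 0$ is a zero-divisor, so it is not regular, and a regular ideal automatically contains a \emph{nonzero} regular element; thus it suffices to take a non-zero-divisor $f\in\mathcal{R}L_\tau$ (so $fg=\mathbf 0$ forces $g=\mathbf 0$, i.e. $\mathrm{Ann}(f)=(0)$) and derive a contradiction from the assumption that $f$ is not a unit.

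First I would translate $\mathrm{Ann}(f)=(0)$ into frame language. By property $(2)$ of the zero map, $fg=\mathbf 0$ is equivalent to $z(f)\vee z(g)=\top$; and since $coz(f)\in\tau$ is complemented in $L$ with complement $z(f)=coz(f)'$, meeting the identity $z(f)\vee z(g)=\top$ with $coz(f)$ collapses it to $coz(f)\leq z(g)$, that is $coz(g)\leq z(f)$. Hence $\mathrm{Ann}(f)=(0)$ says precisely that no nonzero $g$ satisfies $coz(g)\wedge coz(f)=\bot$; equivalently, $coz(f)$ is a dense cozero element. I would record this as the working form of regularity, while $f$ being a non-unit means $z(f)\neq\bot$ by property $(6)$ of the zero map.

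The decisive step is to bring $f$ under the hypothesis of Proposition \ref{inj15}, whose standing assumption is $z(f)\in\tau$. Once $z(f)\in\tau$ is in hand, Proposition \ref{inj15}${\rm (2)}$ closes the argument at once: a non-unit $f$ with $z(f)\in\tau$ is a zero-divisor, contradicting $\mathrm{Ann}(f)=(0)$; therefore $f$ is a unit, $I=\mathcal{R}L_\tau$, and no proper nonzero regular ideal exists. I expect this middle passage---verifying that the zero element of a non-zero-divisor is open, i.e. $z(f)\in\tau$---to be the main obstacle, since in general $z(f)$ is only guaranteed to be closed; the argument must combine the density of $coz(f)$ obtained above with the complementation of the open elements built into the topoframe, and it is here, rather than in the routine appeal to Proposition \ref{inj15}, that the genuine content of the corollary lies.
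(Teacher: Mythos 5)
Your reduction is correct as far as it goes: a regular ideal contains a non-zero-divisor, so the corollary amounts to the assertion that every non-zero-divisor of $\mathcal{R}L_\tau$ is a unit, and your translation of $\mathrm{Ann}(f)=(0)$ into frame language (no nonzero $g$ with $coz(g)\wedge coz(f)=\bot$) is accurate. But the proposal stops exactly where a proof would have to begin: you never establish the ``decisive step'' that a non-zero-divisor satisfies $z(f)\in\tau$; you only assert that it must follow from density of $coz(f)$ together with complementation. That step is not merely missing --- it is false. Take $L=\mathcal{P}(\mathbb{R})$ and $\tau=\mathfrak{O}\mathbb{R}$, so that $\mathcal{R}L_\tau\cong C(\mathbb{R})$, and let $f:\mathcal{P}(\mathbb{R})\rightarrow L$ be the identity map, $f(A)=A$; this lies in $\mathcal{R}L_\tau$ since open intervals are open. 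Then $z(f)=\{0\}\notin\tau$ and $f$ is not a unit, yet $f$ is a non-zero-divisor: if $fg=\mathbf{0}$ then $coz(g)\leq z(f)=\{0\}$, and $coz(g)$ is an open element, so $coz(g)=\bot$ and $g=\mathbf{0}$. Hence $(f)$ is a proper, nonzero, regular ideal, and no argument can close the gap you left open.

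To be fair, the paper's own one-line proof (``trivial from Proposition \ref{inj15}'') commits precisely the leap you flagged: Proposition \ref{inj15}(2) carries the standing hypothesis $z(f)\in\tau$, which is simply not available for an arbitrary non-zero-divisor, and the example above shows the corollary fails at this level of generality. The statement becomes true --- and both the paper's appeal to Proposition \ref{inj15} and your outline then go through verbatim --- once one assumes $L_\tau$ is a $P$-topoframe, the notion defined immediately after the corollary: in that case $Z(L_\tau)\subseteq\tau$ gives $z(f)\in\tau$ for free, so every non-unit is a zero-divisor and every regular ideal is the whole ring (alternatively, by Theorem \ref{inj20} the ring is then von~Neumann regular, and in a regular ring $f=f^2g$ forces $fg=1$ for any non-zero-divisor $f$). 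So your instinct about where the genuine content lies was exactly right; the problem is that this content does not exist without an additional hypothesis on $L_\tau$.
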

\begin{proof}
It is trivial from Proposition \ref{inj15}.
\end{proof}
Recall that a $P$- space ($P$-frame) is one in which every zero set (cozero element) is open (complemented).
Now, we are going to extend these concepts on topoframes.
\begin{definition}
If $Z(L_{\tau})\subseteq \tau$, then $L_{\tau}$ is called a $P$-topoframe.
\end{definition}
Let $A$ be  a ring and $S \subseteq A$. We denote the annihilator of $S$ by $Ann(S)$.
\begin{theorem} \label{inj17} inj17
If $L_{\tau}$ is  a $P$-topoframe, then the ring   $\mathcal{R}L_{\tau}$ is a $\aleph_0$-Kasch ring.
\end{theorem}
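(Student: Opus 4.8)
The plan is to unwind the definition of $\aleph_0$-Kasch: since $gen(I)<\aleph_0$ merely says that $I$ is finitely generated, it suffices to show that every \emph{proper} finitely generated ideal $I=(f_1,\ldots,f_n)$ of $\mathcal{R}L_{\tau}$ is non-essential. My strategy is to massage such an $I$ into the hypothesis of Corollary \ref{inj16}, namely to exhibit $(\bigvee_{f\in I}coz(f))'$ as a non-bottom element of $\tau$, and then to invoke that corollary.

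First I would collapse the generators into the single element $g=f_1^2+\cdots+f_n^2\in I$. Using the zero-map identities $z(f^2)=z(f)$ and $z(h+k)=z(h)\wedge z(k)$ for $h,k\geq\mathbf{0}$, this yields $z(g)=\bigwedge_{i=1}^{n}z(f_i)$. Independently, a short cozero computation (from $coz(hk)=coz(h)\wedge coz(k)$ and $coz(h+k)\leq coz(h)\vee coz(k)$) shows that every $f\in I$, written $f=\sum_i h_if_i$, satisfies $coz(f)\leq\bigvee_i coz(f_i)$, while each $f_i$ itself lies in $I$; hence $\bigvee_{f\in I}coz(f)=\bigvee_{i=1}^{n}coz(f_i)$. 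By the first De Morgan law (Lemma \ref{inj10}(1)) we then get $(\bigvee_{f\in I}coz(f))'=\bigwedge_i z(f_i)=z(g)$.

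This is where the $P$-topoframe hypothesis does its work: each $z(f_i)$ is a zero element, so $z(f_i)\in Z(L_\tau)\subseteq\tau$, and since $\tau$ is a subframe it is closed under finite meets, whence $z(g)=\bigwedge_i z(f_i)\in\tau$. Properness of $I$ then forces $z(g)\neq\bot$: if $z(g)=\bot$, then $g$ would be a unit by property (6) of the zero map, and since $g\in I$ this would give $I=\mathcal{R}L_{\tau}$. Therefore $\bot\neq(\bigvee_{f\in I}coz(f))'=z(g)\in\tau$, which is precisely the hypothesis of Corollary \ref{inj16}.

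Applying Corollary \ref{inj16}, $I$ is contained in a non-essential principal ideal $(p)$. Non-essentiality is inherited downward: if $(p)\cap K=(0)$ for some nonzero ideal $K$, then $I\cap K\subseteq (p)\cap K=(0)$, so $I$ is non-essential as well, and $\mathcal{R}L_{\tau}$ is $\aleph_0$-Kasch. The only delicate points are the two identities $\bigvee_{f\in I}coz(f)=\bigvee_i coz(f_i)$ and $z(g)=\bigwedge_i z(f_i)$ (both routine from the zero-map and cozero calculus), together with the observation that the $P$-topoframe condition is exactly what returns this finite meet of zero elements to $\tau$; I expect that final membership to be the hinge on which the whole argument turns.
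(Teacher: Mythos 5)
Your proof is correct, and it turns on the same hinge as the paper's---forming $g=\sum_{i=1}^n f_i^2$ and using the $P$-topoframe hypothesis to place $z(g)=\bigwedge_{i=1}^n z(f_i)$ in $\tau$---but the concluding mechanism is genuinely different. The paper never invokes Corollary \ref{inj16}: it applies Proposition \ref{inj15} directly to $g$, obtaining a nonzero $h$ with $hg=\mathbf{0}$, then computes $\top=z(hg)=\bigwedge_{i=1}^n z(hf_i)$ to force $hf_i=\mathbf{0}$ for every $i$, so that $\mathbf{0}\neq h\in Ann(I)$; non-essentiality then follows because $I\cap Ann(I)=(0)$ in the reduced ring $\mathcal{R}L_\tau$. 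You instead verify the hypothesis of Corollary \ref{inj16}, which costs the extra (routine, and correctly justified) identity $\bigvee_{f\in I}coz(f)=\bigvee_{i=1}^n coz(f_i)$, and then pass non-essentiality down from a non-essential principal ideal containing $I$. The two routes are close cousins---unwinding Corollary \ref{inj16}, the principal ideal is $(f_{coz(g)})$ and its non-essentiality is witnessed by the idempotent $f_{z(g)}$, which is essentially the paper's annihilator---but the bookkeeping differs. Your version buys modularity: all the ring-theoretic content is delegated to an already-proved corollary, and your new steps (the cozero identity, properness giving $z(g)\neq\bot$, downward inheritance of non-essentiality) are purely formal. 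The paper's version buys an explicit nonzero element of $Ann(I)$ and avoids any computation of the cozero join over all of $I$, at the price of an implicit appeal to reducedness in the final step from $Ann(I)\neq(0)$ to $I$ non-essential. Both are sound proofs of the theorem.
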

\begin{proof}
Let $I=(f_1, \ldots,f_n )$ be a proper finitely generated ideal in
$\mathcal{R}L_{\tau}$.
Since $\sum_{i=1}^nf^2_i\in I$ is not unit,
we conclude from  Proposition \ref{inj15} that
there exists ${\bf0}\not =g\in  \mathcal{R}L_{\tau}$
such that $g\sum_{i=1}^nf^2_i={\bf0}$, which follows that
\[
\top
=z(0)
=z(g\sum_{i=1}^nf^2_i)
= \bigwedge_{i=1}^nz(gf_i).
\]
Then $gf_i={\bf0}$, because $z(gf_i)=\top$, for every $1\leq i\leq n$.
Hence $g\in Ann(I)\not =(0)$ and we infer that $I$ is a non-essential ideal in $\mathcal{R}L_{\tau}$.
Therefore $\mathcal{R}L_{\tau}$ is a $\aleph_0$-Kasch ring.
\end{proof}

\section{regularity of $\mathcal{R}L_{\tau}$}
A ring $A$ is said to be regular (in the sense of Von Neumann) if for every $x\in A $ there is $y\in A$ with $x=x^2y$.
It is shown that for a completely regular frame $L$,
$\mathcal{R}L$ is a regular ring if and only if $L$ is a $P$-frame (see  \cite[Proposition 3.9]{Dube(2009)} ).
The purpose of this section is to extend this result to topoframes.
\begin{remark} \label{inj18}
Let $e$ be an
idempotent element of $\mathcal{R}L_{\tau}$.
Then $e(\{x\})=\bot$, for every $x\in \mathbb R\setminus \{0, 1\}$.
Hence $z(e)=e(\{x\in\mathbb R: -1<x<1\})\in\tau$ and $e=f_{coz(e)}$.
\end{remark}
\begin{remark} \label{inj19}
Let $a$ be an element of a regular ring of $A$.
Then there is  an element $x\in A$ such that $a=xa^2$ and $a=ba^2$, where $b=ax^2$.
If $u:=1+b-ab$, then $u$ is  a unit element of $A$ and $au$ is  an
idempotent element of $A$.
\end{remark}
\begin{theorem} \label{inj20}
For a topoframe $L_{\tau}$, the following statements are equivalent.
\begin{enumerate}
\item[(1)]
$L_{\tau}$ is a $P$-topoframe.
\item[(2)]
$\mathcal{R}L_{\tau}$ is a regular ring.
\end{enumerate}
\end{theorem}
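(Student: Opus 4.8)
The plan is to prove the two implications separately, exploiting the idempotents $f_a$ constructed in Proposition \ref{inj} and the characterization of idempotents recorded in Remark \ref{inj18}.

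For the direction $(1)\Rightarrow(2)$, suppose $L_\tau$ is a $P$-topoframe, so that $Z(L_\tau)\subseteq\tau$. Given $f\in\mathcal{R}L_\tau$, I want to produce $g$ with $f=f^2 g$. The key observation is that $z(f)\in\tau$ by hypothesis, so Proposition \ref{inj}(3) and Proposition \ref{inj15}(1) apply to $a=coz(f)$: since $z(f)=(coz(f))'$ lies in $\tau$, the idempotent $f_{coz(f)}$ is available and satisfies $ff_{coz(f)}=f$. The natural candidate for an inverse on the support is obtained by writing $f$ as a genuine element of $\mathcal{R}L_\tau$ and inverting it where it is nonzero; concretely, I would define $g$ pointwise via $g(\{y\})=f(\{1/y\})$ for $y\neq0$ and $g(\{0\})=z(f)$, and then check that $g\in\mathcal{R}L_\tau$ using that the frame map structure is preserved and that cozero elements are open (property (2) of cozero-elements in the preliminaries). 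The computation $f^2 g = f f_{coz(f)} = f$ should then follow from the multiplication formula in Proposition \ref{inj}(2), once one identifies $fg$ with the idempotent $f_{coz(f)}$. I expect the verification that $g$ is a well-defined frame homomorphism to be the most delicate routine point here.

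For the converse $(2)\Rightarrow(1)$, assume $\mathcal{R}L_\tau$ is regular and let $z=z(f)$ be an arbitrary zero element; I must show $z\in\tau$. By Remark \ref{inj19}, regularity gives a unit $u$ and an idempotent $e=fu$ in $\mathcal{R}L_\tau$. Since $z(u)=\bot$ (units have bottom zero, by property (6) of the zero map), property (2) of the zero map yields $z(e)=z(fu)=z(f)\vee z(u)=z(f)=z$. Now invoke Remark \ref{inj18}: every idempotent $e$ satisfies $z(e)\in\tau$ (indeed $e=f_{coz(e)}$, and its zero is the open element $e(\rrbracket -1,1\llbracket)$). Hence $z=z(e)\in\tau$, and since $f$ was arbitrary this gives $Z(L_\tau)\subseteq\tau$, i.e. $L_\tau$ is a $P$-topoframe.

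The main obstacle is the forward direction: constructing the regularity witness $g$ and confirming it is a legitimate real-continuous function on $L_\tau$, rather than merely a set map on $\mathcal P(\mathbb R)$. The reverse direction is comparatively clean because Remark \ref{inj18} already packages the fact that idempotents have open zero elements, so the work there reduces to chasing the zero-map identities and translating regularity into idempotents via Remark \ref{inj19}.
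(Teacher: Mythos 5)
Your proposal is correct and takes essentially the same route as the paper: your pointwise prescription $g(\{y\})=f(\{1/y\})$ for $y\neq 0$, $g(\{0\})=z(f)$ determines (by join-preservation) exactly the witness $g$ that the paper constructs and verifies to lie in $\mathcal{R}L_{\tau}$, and your converse argument via Remark \ref{inj19} and Remark \ref{inj18} is the paper's argument verbatim. The only cosmetic difference is that you close the forward direction by identifying $fg$ with the idempotent $f_{coz(f)}$ and citing Proposition \ref{inj15}(1), whereas the paper computes $gf^2(\{x\})=f(\{x\})$ directly for $x\neq 0$ and matches zero elements; these are the same routine check.
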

\begin{proof}
$(1)\Rightarrow (2).$
Consider $f\in \mathcal{R}L_{\tau}$.
We define $g:\mathcal{P}(\mathbb{R})\rightarrow L$ by
\[
 g (X)=\left\{
\begin{array}{ll}
 \bigvee_{0\not=x\in X} f(\{\frac 1x\})\vee z(f)& \mbox{if $0\in  X$}\\[1mm]
\bigvee_{x\in X} f(\{\frac 1x\})&  \mbox{if $0\not\in  X$}
\end{array} \right.
\]
for every $X\subseteq \mathbb R$.
Then
$g(\emptyset)=\bigvee_{x\in \emptyset} f(\{\frac 1x\})=\bot$
and
$g(\mathbb R)=\bigvee_{0\not=x\in \mathbb R} f(\{\frac 1x\})\vee z(f)=coz(f)\vee z(f)=\top$.

Consider $X, Y\subseteq \mathbb R$.
If $0\in X\cap Y$, then
\[
\begin{array}{lll}
g (X)\wedge g (Y)
&=&  [\bigvee_{0\not=x\in X} f(\{\frac 1x\})\vee z(f)]\wedge [\bigvee_{0\not=y\in Y} f(\{\frac 1y\})\vee z(f)]\\[2mm]
 &=&
 \bigvee_{0\not=x\in X,0\not=y\in Y } f(\{\frac 1x\}\cap \{\frac 1y\})
 \vee z(f)\\[2mm]
 &=& \bigvee_{0\not=x\in X\cap Y } f(\{\frac 1x\})
 \vee z(f) \\[2mm]
 &=&g( X\cap Y).  \\[2mm]
\end{array}
\]
If $0\in X$ and $0\not\in Y$, then
\[
\begin{array}{lll}
g (X)\wedge g (Y)
&=&  [\bigvee_{0\not=x\in X} f(\{\frac 1x\})\vee z(f)]\wedge \bigvee_{y\in Y} f(\{\frac 1y\})\\[2mm]
 &=&
 \bigvee_{0\not=x\in X,y\in Y } f(\{\frac 1x\}\cap \{\frac 1y\})
 \vee [z(f)\wedge  \bigvee_{y\in Y} f(\{\frac 1y\})]\\[2mm]
 &=& \bigvee_{x\in X\cap Y } f(\{\frac 1x\})
 \vee \bot \\[2mm]
 &=&g( X\cap Y).  \\[2mm]
\end{array}
\]
If $0\not\in X$ and $0\not\in Y$, then
\[
\begin{array}{lll}
g (X)\wedge g (Y)
&=&  \bigvee_{x\in X} f(\{\frac 1x\})\wedge \bigvee_{y\in Y} f(\{\frac 1y\})\\[2mm]
 &=& \bigvee_{x\in X\cap Y } f(\{\frac 1x\})
 \\[2mm]
 &=&g( X\cap Y).  \\[2mm]
\end{array}
\]
Hence $g$ preserves all finite meets.
 Consider $ \{X_i\}_{i\in I}\subseteq P(\mathbb{R})$.
If $0\not\in \bigcup_{i\in I}X_i$, then
\[
\begin{array}{lll}
\bigvee_{i\in I}g (X_i)
&=& \bigvee_{i\in I} \bigvee_{x\in X_i} f(\{\frac 1x\})\\[2mm]
 &=& \bigvee_{x\in \bigcup_{i\in I}X_i } f(\{\frac 1x\})
 \\[2mm]
 &=&g( \bigcup_{i\in I}X_i).  \\[2mm]
\end{array}
\]
If $0\in \bigcup_{i\in I}X_i$, then
\[
\begin{array}{lll}
\bigvee_{i\in I}g (X_i)
&=& \bigvee_{\substack{i\in I,\\0\not\in X_i}}  \bigvee_{x\in X_i} f(\{\frac 1x\})
\vee
[z(f)\vee \bigvee_{\substack{i\in I,\\0\in X_i}}  \bigvee_{0\not=x\in X_i} f(\{\frac 1x\})]
\\[2mm]
 &=& z(f)\vee \bigvee_{0\not =x\in \bigcup_{i\in I}X_i } f(\{\frac 1x\})
 \\[2mm]
 &=&g( \bigcup_{i\in I}X_i).  \\[2mm]
\end{array}
\]
Hence $g$ preserves arbitrary joins.
Consider $p,q \in Q$ with $p<q$.
If $p<0<q$, then $\{\frac 1x: p<x<q, x\not=0\}=\mathbb R\setminus \{r\in \mathbb R: \frac 1p\leq r\leq  \frac 1q \}$ and since $f\in \mathcal{R}L_{ \tau}$, we conclude from $z(f)\in\tau$ that $g(\rrbracket p, q \llbracket)\in \tau$. If $0\not\in \rrbracket p, q \llbracket$, then $g(\rrbracket p, q \llbracket)=f(\rrbracket \frac 1q, \frac 1p \llbracket)\in \tau$.
 Therefore,  $g\in \mathcal{R}L_{\tau}$.  Since $z(f)=z(g)$, we conclude that
$z(f)=z(gf^2)$.
We claim that $f=gf^2$.
In order to prove our claim, we consider $0\not=x\in \mathbb R$.
Then we have
\[
\begin{array}{lll}
gf^2(\{x\})
&=&\bigvee \{g(\{y\})\wedge  f^2(\{\frac xy\}): 0\not=y\in \mathbb R \}\\[2mm]
&=&\bigvee \{f(\{\frac 1y\})\wedge f(\{z\})\wedge  f(\{\frac x{zy}\}): y, z\in \mathbb R\setminus \{0\} \}  \\[2mm]
&=& f(\{x\}) .\\[2mm]
\end{array}
\]
Therefore,  $\mathcal{R}L_{\tau}$ is a regular ring.

$(2)\Rightarrow (1).$
Consider $f\in \mathcal{R}L_{\tau}$.  By Remark  \ref{inj19}, there is a unit
element of $g$ in   $\mathcal{R}L_{\tau}$ such that $fg$ is an idempotent element of  $\mathcal{R}L_{\tau}$.
By Remark  \ref{inj18}, we have
$z(f)=z(fg)\in\tau$.
Therefore, $L_{\tau}$ is a $P$-topoframe.
\end{proof}
\section{Injectivity of  $\mathcal{R}L_{\tau}$}

A ring $A$ is said to be {\it self injective} ($\aleph_0$-{\it selfinjective}) if every $A$-homomorphism
from an ideal (a countably generated ideal) of $A$ to $A$ can be extended to an $A$-homomorphism from $A$ to $A$.
The principal purposes in this section are to find the properties of a topoframe $L_\tau$ which are
equivalent to the ring-theoretic properties of $\mathcal{R}L_{\tau}$,
in particular: $\aleph_0$-selfinjectivity and selfinjectivity.
First, we investigate the $\aleph_0$-selfinjectivity of $\mathcal{R}L_{\tau}$.
In order to survey  this property we need some background.

 A subset $S$ of a ring $A$ is said to be {\it orthogonal} provided
 $xy=0$ for all $x,y \in S$ with $ x\not=y$.
 If $S\cap T=\emptyset$ and $S\cup T$ is
 an orthogonal set in $A$,
 then $a\in A$ is said to {\it separate} $S$ from $T$ if $a\in Ann(T)$ and
 $s^2a=s,$ for every $ s\in S$  (see \cite{Karamzadeh1997}).
 In \cite{Karamzadeh1999} it is shown
 that there exists an element in $A$ which separates $S$ from $T$ if and
 only if there is  an element $b$ in $A$ such that
 $b\in Ann(T)$ and $s^2=sb,$ for every $ s\in S$.
For the proof of the next lemma see \cite[Theorem 2.2]{Karamzadeh1997},
 and \cite[Proposition 1.2]{Karamzadeh1999}.
\begin{lemma} \label{kar25}
Let $A$ be a reduced ring, then the following statements are equivalent.
\begin{enumerate}
\item[{\rm (1)}]
The ring $A$ is selfinjective ($\aleph_0$-selfinjective).
\item[{\rm (2)}]
The ring  $A$ is a regular ring and whenever $S\cup T$ is an orthogonal (countable) set
 with $S\cap T=\emptyset$, then there exists an element in $A$ which separates $S$ from $T$.
\end{enumerate}
\end{lemma}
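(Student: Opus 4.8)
The plan is to invoke Baer's criterion: for a commutative ring $A$, self-injectivity (resp.\ $\aleph_0$-self-injectivity) is equivalent to the assertion that every $A$-homomorphism $\phi$ from an ideal (resp.\ a countably generated ideal) $I$ to $A$ is multiplication by a fixed element, i.e.\ there is $a\in A$ with $\phi(x)=ax$ for all $x\in I$. I would prove the two implications separately, throughout exploiting that $A$ is reduced, so that $x^2y=0$ forces $xy=0$ and annihilators of an element and its powers coincide.

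For $(1)\Rightarrow(2)$, regularity is obtained by feeding Baer's criterion the homomorphism $\phi\colon x^2A\to A$, $\phi(x^2r)=xr$; this is well defined precisely because $A$ is reduced, and an extension evaluated at $x^2$ gives $x=ax^2$. For the separation property, given an orthogonal set $S\cup T$ with $S\cap T=\emptyset$, I would consider the ideal $I$ generated by $\{s^2:s\in S\}\cup T$ and define $\phi(s^2)=s$, $\phi(t)=0$, extended $A$-linearly. The only real point is well-definedness: multiplying a vanishing combination $\sum s^2r_s+\sum tu_t=0$ by a fixed $s_0\in S$ and using orthogonality collapses it to $s_0^3r_{s_0}=0$, whence $s_0r_{s_0}=0$ by reducedness, so $\sum sr_s=0$. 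An element $a$ extending $\phi$ then satisfies $a\in Ann(T)$ and $s^2a=s$, i.e.\ $a$ separates $S$ from $T$. When $S\cup T$ is countable, $I$ is countably generated, so the $\aleph_0$-version of the hypothesis suffices.

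For $(2)\Rightarrow(1)$, using regularity I would first reduce to data indexed by idempotents. Every $x\in I$ satisfies $x=xe_x$ for an idempotent $e_x\in I$, and $A$-linearity gives $\phi(x)=x\phi(e_x)$; hence it is enough to produce $a$ with $ae=\phi(e)$ for every idempotent $e\in I$. Writing $a_\lambda=\phi(e_\lambda)$ for an orthogonal family of idempotents $\{e_\lambda\}$ of $I$ (a countable generating family in the $\aleph_0$-case, a maximal one in general), one checks $\{a_\lambda\}$ is orthogonal and each $a_\lambda$ lives on a support idempotent $f_\lambda\le e_\lambda$. I would then apply the separation property to $S=\{c_\lambda\}$, the weak inverses of the $a_\lambda$, and $T=\{e_\lambda-f_\lambda\}$ (a routine check shows $S\cup T$ is orthogonal): the separating element $a$ satisfies $a(e_\lambda-f_\lambda)=0$ and $c_\lambda^2a=c_\lambda$, and multiplying the latter by $a_\lambda^2$ yields $af_\lambda=a_\lambda$, so $ae_\lambda=a_\lambda=\phi(e_\lambda)$.

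The main obstacle is the final verification that this single $a$ extends $\phi$ on all of $I$. In the $\aleph_0$-case the family $\{e_\lambda\}$ generates $I$, so agreement on the $e_\lambda$ together with $A$-linearity finishes it. In general I would argue that $J=\sum_\lambda e_\lambda A$ is essential in $I$ (a nonzero idempotent of $I$ meeting no $e_\lambda$ would contradict maximality), and then show in a reduced regular ring that an $A$-homomorphism $I\to A$ vanishing on an essential subideal vanishes: for an idempotent $e\in I$ the value $\psi(e):=\phi(e)-ae$ sits on a support idempotent $h\le e$, on which it is invertible, while essentiality forces a nonzero element of $hA\cap J$ that $\psi$ must annihilate, a contradiction unless $\psi(e)=0$. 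Assembling these gives $\phi(x)=ax$ for all $x\in I$. This essentiality-plus-propagation step, which is exactly where the completeness hidden in the separation property is spent, demands the most care; everything else is orthogonality bookkeeping and reduced-ring identities.
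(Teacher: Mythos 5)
There is nothing in the paper to compare your argument against line by line: the authors do not prove Lemma \ref{kar25} at all, but quote it, directing the reader to \cite[Theorem 2.2]{Karamzadeh1997} for the selfinjective case and \cite[Proposition 1.2]{Karamzadeh1999} for the $\aleph_0$-case. So your proposal is not a variant of the paper's proof; it is a self-contained proof of a result the paper only cites, and as a sketch it is correct. Note first that the ``Baer's criterion'' you invoke is not an extra input here: with the paper's definition of ($\aleph_0$-)selfinjectivity (extendability of $A$-homomorphisms from (countably generated) ideals to $A$), an extension $A\to A$ is multiplication by the image of $1$, so your reformulation is literally the definition. Your $(1)\Rightarrow(2)$ is clean, and the well-definedness checks resting on reducedness ($x^2y=0\Rightarrow xy=0$) are exactly the right points to isolate. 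In $(2)\Rightarrow(1)$, the skeleton --- reduce to idempotents via $x=xe_x$, pass to an orthogonal family $\{e_\lambda\}$ (orthogonalized countable generating family, resp.\ maximal family), set $a_\lambda=\phi(e_\lambda)\in e_\lambda A$, separate $S=\{c_\lambda\}$ from $T=\{e_\lambda-f_\lambda\}$, and deduce $ae_\lambda=a_\lambda$ from $c_\lambda^2a=c_\lambda$ by multiplying with $a_\lambda^2$ --- checks out, as does the final essentiality-plus-propagation step (if $\psi$ kills the essential subideal $J=\sum_\lambda e_\lambda A$ and $\psi(e)\neq 0$ has support idempotent $h\le e$, any nonzero $w\in hA\cap J$ gives $0=\psi(we)=w\psi(e)$, hence $w=wh=0$, a contradiction). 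Two places you gesture at should be written out to make this airtight: (i) in the $\aleph_0$-case, the existence of a countable \emph{orthogonal} generating family of idempotents (by regularity each $(x_1,\dots,x_n)$ is generated by an idempotent $f_n$, and $e_n=f_n-f_{n-1}$ works); (ii) the degenerate bookkeeping when some $c_\lambda$ or $e_\lambda-f_\lambda$ is zero or $S$ and $T$ collide --- all such elements are $0$ and can be discarded without affecting orthogonality or the separation conditions. What the paper's route buys is brevity; what yours buys is a verifiable, self-contained argument in the same circle of ideas (quasi-inverses, orthogonal idempotents, essential ideals in reduced regular rings) as the cited sources.
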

\begin{lemma} \label{inj30}
Let $S\cup T\subseteq \mathcal{R}L_{\tau}$ be an orthogonal   set
 with $S\cap T=\emptyset$. If
 $s=\bigvee_{f\in S}coz(f)$ and
 $t=\bigvee_{f\in T}coz(f)$, then
\begin{enumerate}
\item[{\rm (1)}]
 $s\wedge t'=s$.
 \item[{\rm (2)}]
 For every $R\subseteq S\cup T$ and
 every $f\in (S\cup T)\setminus R$,
\begin{center}
 $z(f)\vee \bigvee_{g\in R} z(g)=z(f)$ and $coz(f)\wedge \bigwedge_{g\in R} z(g)=coz(f)$.
 \end{center}
\end{enumerate}
\end{lemma}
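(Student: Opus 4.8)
The plan is to prove both statements by unwinding the definitions of $s$ and $t$ as joins of cozero elements and exploiting the orthogonality hypothesis, which forces the product $fg$ to vanish whenever $f \neq g$ lie in $S \cup T$. The key algebraic input is the identity $z(fg) = z(f) \vee z(g)$ from the zero-map properties, together with the observation that $fg = \mathbf{0}$ is equivalent to $z(fg) = \top$, i.e. to $z(f) \vee z(g) = \top$, which dualizes to $coz(f) \wedge coz(g) = \bot$.

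For statement (2), I would first establish the pointwise version: for $f \in (S \cup T) \setminus R$ and any single $g \in R$, orthogonality gives $fg = \mathbf{0}$, hence $coz(f) \wedge coz(g) = \bot$, equivalently $coz(f) \leq coz(g)' = z(g)$. Passing to joins over all $g \in R$ yields $coz(f) \leq \bigwedge_{g \in R} z(g)$, which is exactly $coz(f) \wedge \bigwedge_{g \in R} z(g) = coz(f)$. The companion equation $z(f) \vee \bigvee_{g \in R} z(g) = z(f)$ then follows by taking complements, since $coz(f) \leq z(g)$ means $coz(g) \leq z(f)$ for each $g$; taking the join over $g \in R$ gives $\bigvee_{g \in R} coz(g) \leq z(f)$, so $z(f) \vee \bigvee_{g \in R} z(g) = z(f)$ once one rewrites $z(g) = coz(g)'$ and applies the first De Morgan law. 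I expect the main care point here to be the direction of the inequalities and keeping the complementation straight, since $z$ and $coz$ are interchanged under primes.

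For statement (1), I would specialize the reasoning of (2). Since $S$ and $T$ are disjoint and $S \cup T$ is orthogonal, every $f \in S$ satisfies $coz(f) \wedge coz(g) = \bot$ for all $g \in T$, hence $coz(f) \leq coz(g)' = z(g)$ for each such $g$. Taking the join of $coz(f)$ over $f \in S$ and using that this bound holds uniformly gives $s = \bigvee_{f \in S} coz(f) \leq \bigwedge_{g \in T} z(g) = t'$, where the last equality is exactly Lemma \ref{inj10}(1). Thus $s \leq t'$, which is the same as $s \wedge t' = s$.

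The routine parts are the manipulations with joins and meets and the repeated use of De Morgan; the only genuine obstacle is bookkeeping the passage from the single-element orthogonality relations to the infinitary joins over $R$ or over $S$, and I would handle this uniformly by first recording the basic equivalence $fg = \mathbf{0} \iff coz(f) \wedge coz(g) = \bot \iff coz(f) \leq z(g)$ and then invoking it inside each join.
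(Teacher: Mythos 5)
Your proof of part (1) is correct and is exactly the paper's argument: orthogonality together with $S\cap T=\emptyset$ gives $fg=\mathbf{0}$ for $(f,g)\in S\times T$, hence $coz(f)\wedge coz(g)=\bot$, hence $coz(f)\leq z(g)$; taking the join over $f\in S$ and the meet over $g\in T$ yields $s\leq\bigwedge_{g\in T}z(g)=t'$, i.e.\ $s\wedge t'=s$. Your derivation of the second identity in (2), namely $coz(f)\leq\bigwedge_{g\in R}z(g)$, is likewise correct and is all the paper means by its ``similar to (1)''.

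The problem is your last step for the first identity in (2). From $coz(g)\leq z(f)$ for every $g\in R$ you correctly obtain $\bigvee_{g\in R}coz(g)\leq z(f)$, i.e.\ $z(f)\vee\bigvee_{g\in R}coz(g)=z(f)$; but no application of De Morgan converts this into $z(f)\vee\bigvee_{g\in R}z(g)=z(f)$. The first De Morgan law relates the complement of a \emph{join of cozeros} to the \emph{meet of zeros}, $\bigl(\bigvee_{g\in R}coz(g)\bigr)'=\bigwedge_{g\in R}z(g)$; it says nothing about $\bigvee_{g\in R}z(g)$. Indeed the identity you are trying to reach is false whenever $R\neq\emptyset$ and $f\neq\mathbf{0}$: for any $g\in R$ we have $fg=\mathbf{0}$, so $z(f)\vee z(g)=z(fg)=z(\mathbf{0})=\top$, and therefore $z(f)\vee\bigvee_{g\in R}z(g)=\top\neq z(f)$. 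So the first displayed equation of the lemma is a misprint; it should read $z(f)\vee\bigvee_{g\in R}coz(g)=z(f)$, the complemented companion of the second identity and the form consistent with how the lemma is invoked later (e.g.\ the step $z(fh)=z(f)\vee s'=z(f)$ in Theorem \ref{inj35}, which only needs $s'\leq z(f)$). Your intermediate inequality $\bigvee_{g\in R}coz(g)\leq z(f)$ is precisely the correct statement together with its proof; the right move is to record it and flag the misprint, not to bridge the gap with an invalid De Morgan appeal.
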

\begin{proof}
${\rm (1)}$
Consider $(f, g)\in S\times T$.
Since $fg={\bf0}$, we infer that $coz(f)\wedge coz(g)=\bot$,
which follows that $coz(f)\leq z(g)$. Hence $\bigwedge_{g\in T} z(g)$ is
 an upper bounded $\{coz(f) : f\in S\}$
 and so
 $s=\bigvee_{f\in S}coz(f)\leq \bigwedge_{g\in T} z(g)=t'$.

${\rm (2)}$
Similar to the proof of statement ${\rm (1)}$.
\end{proof}

The following theorem is a counterpart of Theorem 1 in \cite{EstajiKaramzadeh2003} and  we characterize $P$-topoframes in the following theorem.
\begin{theorem} \label{inj35}
 The topoframe $L_{\tau}$ is  a $P$-topoframe
 if and only if  the ring   $\mathcal{R}L_{\tau}$ is $\aleph_0$-selfinjective.
\end{theorem}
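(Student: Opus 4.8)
The plan is to route both implications through Lemma~\ref{kar25}, using that $\mathcal{R}L_{\tau}$ is always reduced. The implication from $\aleph_0$-selfinjectivity to the $P$-topoframe property is immediate: if $\mathcal{R}L_{\tau}$ is $\aleph_0$-selfinjective, then Lemma~\ref{kar25} forces it to be regular, and Theorem~\ref{inj20} then yields that $L_{\tau}$ is a $P$-topoframe. So the substance lies in the converse. Assuming $L_{\tau}$ is a $P$-topoframe, Theorem~\ref{inj20} already gives that $\mathcal{R}L_{\tau}$ is regular, and by Lemma~\ref{kar25} it remains only to verify the (countable) separation property: for every countable orthogonal set $S\cup T$ with $S\cap T=\emptyset$, I must produce an element separating $S$ from $T$. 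By the equivalence recalled just before Lemma~\ref{kar25}, it suffices to exhibit $b\in\mathcal{R}L_{\tau}$ with $b\in Ann(T)$ and $sb=s^2$ for every $s\in S$.

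To build $b$, enumerate $S=\{s_1,s_2,\dots\}$ and set $c_i=coz(s_i)$. The $P$-topoframe hypothesis makes each $z(s_i)=c_i'$ open, so each $c_i$ is complemented with $c_i,c_i'\in\tau$, while orthogonality of $S$ gives $c_i\wedge c_j=\bot$ for $i\neq j$. By the countable join property of cozero-elements there is $h\in\mathcal{R}L_{\tau}$ with $\bigvee_i c_i=coz(h)$; writing $d=z(h)=(\bigvee_i c_i)'$, the $P$-topoframe hypothesis again guarantees $d\in\tau$. I then define $b:\mathcal P(\mathbb R)\to L$ by
\[
b(X)=\left\{
\begin{array}{ll}
 d\vee\bigvee_i s_i(X\setminus\{0\})& \mbox{if $0\in X$},\\[1mm]
\bigvee_i s_i(X)& \mbox{if $0\notin X$},
\end{array}\right.
\]
the frame-theoretic counterpart of the function that agrees with $s_i$ on the clopen piece $c_i$ and vanishes on $d$.

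The verification then proceeds in the spirit of the computation in Theorem~\ref{inj20}. First I would check that $b$ is a frame homomorphism: $b(\emptyset)=\bot$, $b(\mathbb R)=d\vee\bigvee_i c_i=\top$, and preservation of finite meets and arbitrary joins follows from the pairwise disjointness $c_i\wedge c_j=\bot$ together with $s_i(X\setminus\{0\})\leq c_i$ and $c_i\wedge d=\bot$. Next, membership $b\in\mathcal{R}L_{\tau}$ reduces to showing $b(\rrbracket p,q\llbracket)\in\tau$ for rationals $p<q$; this is exactly where the hypothesis is used, since for an interval containing $0$ the value is $d$ joined with a join of elements of $\tau$, and $\tau$ is closed under arbitrary joins while $d\in\tau$ by construction. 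Finally, from $coz(b)=\bigvee_i c_i$ and orthogonality one gets $coz(b)\wedge coz(t)=\bot$, hence $bt=\mathbf 0$ for every $t\in T$, so $b\in Ann(T)$; and a direct evaluation on singletons, using property (1) of the operations together with $s_j(X)\leq c_j$ and the disjointness of the $c_i$, gives $s_jb=s_j^2$ for each $j$.

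I expect the main obstacle to be the frame-homomorphism check for $b$, specifically preservation of finite meets across the two cases in its definition: the $\vee d$ summand that appears only when $0$ belongs to the argument must be reconciled with the disjointness relations $c_i\wedge c_j=\bot$ and $c_i\wedge d=\bot$, much as the summand $z(f)$ had to be handled in the regularity argument of Theorem~\ref{inj20}. Once $b$ is known to be a well-defined member of $\mathcal{R}L_{\tau}$, the separation identities $bt=\mathbf 0$ and $s_jb=s_j^2$ are routine, and Lemma~\ref{kar25} completes the proof.
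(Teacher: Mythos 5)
Your proposal is correct and follows essentially the same route as the paper: your separating element $b$ is exactly the paper's $h$ (your $d$ is the paper's $s'$), built by the same two-case formula and verified the same way, with both implications resting on Lemma~\ref{kar25} together with Theorem~\ref{inj20}. The only cosmetic differences are that in the sufficiency direction the paper extends the homomorphism $gf^2\mapsto gf$ on the principal ideal $(f^2)$ to get regularity directly instead of quoting the regularity half of Lemma~\ref{kar25}, while your explicit check that $d\in\tau$ (needed for $b(\rrbracket p,q\llbracket)\in\tau$) spells out a point the paper leaves implicit.
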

\begin{proof}
 \textit{Necessity.} Let $S\cup T\subseteq \mathcal{R}L_{\tau}$ be an orthogonal
 countable set
 with $S\cap T=\emptyset$ and
 $s=\bigvee_{f\in S}coz(f)$.
We define $h:\mathcal{P}(\mathbb{R})\rightarrow L$ by
\[
 h (X)=\left\{
\begin{array}{ll}
 \bigvee_{f\in S}f(X\setminus \{0\})\vee s'& \mbox{if $0\in  X$}\\[1mm]
  \bigvee_{f\in S}f(X)&  \mbox{if $0\not\in  X$}
\end{array} \right.
\]
for every $X\subseteq \mathbb R$.
First of all, we show that
 $h\in\mathcal{R}L_{\tau}$.
 It is clear that
 $h(\emptyset)=  \bigvee_{f\in S}f(\emptyset)=\bot$
and
$h(\mathbb R)= \bigvee_{f\in S}f(\mathbb R\setminus \{0\})\vee s'=s\vee s'=\top$.
 Consider $X, Y\subseteq \mathbb R$.
If $0\in X\cap Y$, then
\[
\begin{array}{lll}
h (X)\wedge h (Y)
&=&  [ \bigvee_{f\in S} f(X\setminus \{0\})\vee s']\wedge [  \bigvee_{f\in S}f(Y \setminus \{0\})\vee s']\\[2mm]
 &=&
  \bigvee_{f\in S}f((X\cap Y)\setminus \{0\})
 \vee s'\\[2mm]
 &=&h( X\cap Y).  \\[2mm]
\end{array}
\]
If $0\in X$ and $0\not\in Y$, then
\[
\begin{array}{lll}
h (X)\wedge h (Y)
&=&[ \bigvee_{f\in S} f(X\setminus \{0\})\vee s']\wedge  \bigvee_{f\in S}f(Y)\\[2mm]
 &=&
 \bigvee_{f\in S} f( X\cap Y)
 \vee [s'\wedge  \bigvee_{f\in S} f(Y)]\\[2mm]
 &=& \bigvee_{f\in S} f( X\cap Y)
 \vee \bot \\[2mm]
 &=&h( X\cap Y).  \\[2mm]
\end{array}
\]
If $0\not\in X$ and $0\not\in Y$, then
\[
h(X)\wedge h(Y)
=  \bigvee_{f\in S} f(X)\wedge  \bigvee_{f\in S} f(Y)
 = \bigvee_{f\in S} f( X\cap Y)
 =h( X\cap Y).
\]
Hence $h$ preserves all finite meets.
 Consider $ \{X_i\}_{i\in I}\subseteq P(\mathbb{R})$.
If $0\not\in \bigcup_{i\in I}X_i$, then
\[
\bigvee_{i\in I}h(X_i)
= \bigvee_{i\in I}  \bigvee_{f\in S}f(X_i)
 = \bigvee_{f\in S} f( \bigcup_{i\in I}X_i )
 =h( \bigcup_{i\in I}X_i).
\]
If $0\in \bigcup_{i\in I}X_i$, then
\[
\begin{array}{lll}
\bigvee_{i\in I}h (X_i)
&=& \bigvee_{\substack{i\in I,\\0\not\in X_i}} \bigvee_{f\in S}  f(X_i)
\vee
[s'\vee \bigvee_{\substack{i\in I,\\0\in X_i}}  \bigvee_{f\in S} f(X_i\setminus \{0\})]
\\[2mm]
 &=& s'\vee
 \bigvee_{f\in S} f(\bigcup_{i\in I}X_i \setminus \{0\})
 \\[2mm]
 &=&h( \bigcup_{i\in I}X_i).  \\[2mm]
\end{array}
\]
Hence $h$ preserves arbitrary joins.
Therefore,  $h\in \mathcal{R}L_{\tau}$.
 Now, we show that $h\in Ann(T)$.
 Consider $g\in T$ and
$t=\bigvee_{f\in T}coz(f)$.
 Then, by Lemma \ref{inj30}, we have
\[
\begin{array}{lll}
z(hg)
&=& z(h)\vee z(g)        \\[2mm]
&= & s' \vee z(g)       \\[2mm]
 &\geq & t   \vee z(g)      \\[2mm]
&\geq &  coz(g) \vee z(g)      \\[2mm]
&=& \top,        \\[2mm]
 \end{array}
\]
which follows that $hg={\bf0}$.
Therefore   $h\in Ann(T)$.
Since $z(h)=s'$, we conclude from
Lemma \ref{inj30} that
$z(fh)=z(f)\vee s'=z(f)=z(f^2)$,
for every $f\in S$.
Consider $0\not=x\in \mathbb R$.
Then
\[
\begin{array}{lll}
hf(\{x\})
&=&\bigvee \{h(\{y\})\wedge  f(\{\frac xy\}): 0\not=y\in \mathbb R \}\\[2mm]
&=&\bigvee \{f(\{y\})\wedge   f(\{\frac x{y}\}): y\in \mathbb R\setminus \{0\} \}  \\[2mm]
&=& f^2(\{x\}) .\\[2mm]
\end{array}
\]
 Hence $hf=f^2$,
   which means that $h$ separates $S$ from $T$.
Now, by Theorem \ref{inj20} and Lemma \ref{kar25}, we
 are through.

 \textit{ Sufficiency.}
    Consider $f\in \mathcal{R}L_{\tau}$ and $I$ is
  the generated ideal by $f^2$ in $\mathcal{R}L_{\tau}$.
Since $h: I\rightarrow \mathcal{R}L_{\tau}$ given by
$gf^2\mapsto gf$ is a $\mathcal{R}L_{\tau}$-homomorphism, we conclude from statement (2) that  there exists a  $\mathcal{R}L_{\tau}$-homomorphism  $\bar h:  \mathcal{R}L_{\tau}\rightarrow \mathcal{R}L_{\tau}$ such that $\bar h|_{_I}=h$.
Hence $$f=h(f^2)=\bar h({\bf1} f^2)=\bar h({\bf1}) f^2$$
Then $\mathcal{R}L_{\tau}$ is a regular ring. Therefore, by Theorem \ref{inj20},
the topoframe $L_{\tau}$ is a $P$-topoframe.
\end{proof}
As an immediate consequence, by this theorem and Theorem \ref{inj20}, we have.
\begin{corollary}
For a topoframe $L_\tau$, the ring $\mathcal{R}L_\tau$ is regular if and only if it is $\aleph_0$-selfinjective
\end{corollary}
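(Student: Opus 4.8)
The plan is to obtain the stated equivalence purely by transitivity, using the two characterizations already established, both of which pivot on the single intermediate condition that $L_\tau$ be a $P$-topoframe. Concretely, Theorem \ref{inj20} asserts that $\mathcal{R}L_\tau$ is a regular ring if and only if $L_\tau$ is a $P$-topoframe, while Theorem \ref{inj35} asserts that $L_\tau$ is a $P$-topoframe if and only if $\mathcal{R}L_\tau$ is $\aleph_0$-selfinjective. Since the property ``$L_\tau$ is a $P$-topoframe'' is common to both statements, the two biconditionals chain together directly.

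In more detail, I would argue both directions through this common pivot. For the forward implication, if $\mathcal{R}L_\tau$ is regular, then by Theorem \ref{inj20} the topoframe $L_\tau$ is a $P$-topoframe, and hence by the necessity half of Theorem \ref{inj35} the ring $\mathcal{R}L_\tau$ is $\aleph_0$-selfinjective. For the reverse implication, if $\mathcal{R}L_\tau$ is $\aleph_0$-selfinjective, then by the sufficiency half of Theorem \ref{inj35} the topoframe $L_\tau$ is a $P$-topoframe, and hence by Theorem \ref{inj20} the ring $\mathcal{R}L_\tau$ is regular. Combining these yields the desired equivalence
\[
\mathcal{R}L_\tau \text{ regular}
\iff L_\tau \text{ a } P\text{-topoframe}
\iff \mathcal{R}L_\tau \ \aleph_0\text{-selfinjective}.
\]

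There is essentially no obstacle to surmount here, since all the substantive content was already discharged in proving Theorems \ref{inj20} and \ref{inj35}; the only point one must verify is that both theorems are stated relative to the very same notion of $P$-topoframe (namely $Z(L_\tau)\subseteq\tau$), so that the intermediate condition genuinely coincides and the chain of biconditionals is legitimate. I would therefore present this as a one-line deduction rather than a developed argument.
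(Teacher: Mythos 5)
Your proposal is correct and is exactly the paper's own argument: the corollary is stated there as an immediate consequence of Theorem \ref{inj20} ($\mathcal{R}L_\tau$ regular $\iff$ $L_\tau$ a $P$-topoframe) and Theorem \ref{inj35} ($L_\tau$ a $P$-topoframe $\iff$ $\mathcal{R}L_\tau$ $\aleph_0$-selfinjective), chained through the common pivot condition just as you describe. Nothing further is needed.
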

 A frame $L$ is called extremally disconnected if $a^{**}\vee a^*=\top$ for all $a \in L$.

Also note that for every regular frame $L$, if $f: L\rightarrow M$
and $g: L\rightarrow M$ are frame maps with $f\leq g$, then $f=g$ (see \cite{Banaschewski(1997)2}).
\begin{proposition} \label{inj404}
If $L$ is an extremally
 disconnected frame and $L_\tau$ is a $P$-topoframe, then $\mathcal{R}L_{\tau}$ is a selfinjective ring.
 But the converse is not true
\end{proposition}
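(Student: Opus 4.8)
The plan is to invoke the reduced-ring criterion of Lemma \ref{kar25}: a reduced ring is self-injective exactly when it is regular and every orthogonal set $S\cup T$ with $S\cap T=\emptyset$ admits a separating element. Since $L_\tau$ is a $P$-topoframe, Theorem \ref{inj20} already supplies the regularity of $\mathcal{R}L_\tau$, and $\mathcal{R}L_\tau$ is reduced. Thus everything reduces to producing, for an \emph{arbitrary} (no longer merely countable) orthogonal set $S\cup T\subseteq\mathcal{R}L_\tau$, an element that separates $S$ from $T$. The extra hypothesis that $L$ is extremally disconnected is what must upgrade the countable separation of Theorem \ref{inj35} to the general case needed here.

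First I would reuse verbatim the separating element built in the necessity part of Theorem \ref{inj35}: putting $s=\bigvee_{f\in S}coz(f)$, define $h(X)=\bigvee_{f\in S}f(X\setminus\{0\})\vee s'$ when $0\in X$ and $h(X)=\bigvee_{f\in S}f(X)$ when $0\notin X$. Every computation in Theorem \ref{inj35} showing that $h$ preserves finite meets and arbitrary joins, that $h\in Ann(T)$ (via Lemma \ref{inj30} together with $z(h)=s'$), and that $hf=f^2$ for all $f\in S$, is purely algebraic in the values $f(\cdot)$ and in $s'$, and never uses countability; so all of these carry over unchanged to an arbitrary index set. Consequently the only step that does not automatically survive the passage from countable to arbitrary $S$ is the verification that $h$ is genuinely $\tau$-real-continuous, that is, that $h(\rrbracket p,q\llbracket)\in\tau$ for all rationals $p<q$.

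This is the heart of the matter and the step where extremal disconnectedness of $L$ enters. For $0\notin\rrbracket p,q\llbracket$ one has $h(\rrbracket p,q\llbracket)=\bigvee_{f\in S}f(\rrbracket p,q\llbracket)\in\tau$ because $\tau$ is closed under joins; the difficulty occurs for $p<0<q$, where $h(\rrbracket p,q\llbracket)=\big(\bigvee_{f\in S}f(\rrbracket p,q\llbracket\setminus\{0\})\big)\vee s'$, whose first summand lies in $\tau$ but whose second summand $s'=s^*$ is a priori only a closed element. In the countable case of Theorem \ref{inj35} this caused no trouble, since a countable join of cozero elements is again a cozero element, so $s=coz(f_0)$ for some $f_0$ and $s'=z(f_0)\in Z(L_\tau)\subseteq\tau$ by the $P$-topoframe hypothesis. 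For an arbitrary family $S$ the element $s$ need no longer be a cozero element, so I must establish directly that $s'\in\tau$. Here I would argue that, since $L_\tau$ is a $P$-topoframe, each $coz(f)$ is complemented with complement $z(f)\in\tau$, so that $s=\bigvee_{f\in S}coz(f)\in\tau$ is complemented in $L$, and by the first De Morgan law $s'=s^*=\bigwedge_{f\in S}z(f)$; extremal disconnectedness of $L$ is then invoked to force this meet of clopen elements back into $\tau$. I expect this to be the main obstacle, and the one place where the proof must be written with genuine care, as it is exactly the content that separates self-injectivity from mere $\aleph_0$-self-injectivity. Once $s'\in\tau$ is secured, $h\in\mathcal{R}L_\tau$ separates $S$ from $T$, and Lemma \ref{kar25} delivers self-injectivity.

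Finally, for the assertion that the converse fails, the strategy is to exhibit a topoframe $L_\tau$ whose ring $\mathcal{R}L_\tau$ is self-injective while $L$ itself is not extremally disconnected. The guiding observation is that self-injectivity of $\mathcal{R}L_\tau$ is controlled by the subframe $\tau$ of open elements, through the embedding of $\mathcal{R}L_\tau$ as a sub-$f$-ring of $\mathcal{R}\tau$, rather than by the manner in which $\tau$ is situated inside the ambient frame $L$. Concretely I would take $\tau$ to carry a complete Boolean (hence extremally disconnected) $P$-topoframe structure, and then choose the ambient $L$ to be a frame in which the elements of $\tau$ remain complemented but $a^{**}\vee a^*=\top$ fails for some $a\notin\tau$. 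Such an example cannot be found among classical spaces, where the ambient frame is a power set and is therefore automatically Boolean and extremally disconnected; it is a genuinely pointfree phenomenon, which is precisely why the converse breaks down.
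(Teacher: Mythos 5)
Your reduction is sound, and you have put your finger on exactly the right spot: the frame-map computations for $h$, the verification $h\in Ann(T)$ via Lemma \ref{inj30}, and the identity $hf=f^2$ in Theorem \ref{inj35} nowhere use countability, so the entire problem is whether $h(\rrbracket p,q\llbracket)\in\tau$ when $p<0<q$, i.e.\ essentially whether $s'\in\tau$. But the step you propose to close this gap is not valid: extremal disconnectedness of $L$ cannot ``force'' $s'=\bigwedge_{f\in S}z(f)$ into $\tau$. It is a condition on $L$ alone and puts no constraint on which complemented elements of $L$ happen to lie in the subframe $\tau$; indeed, for $s\in\tau$, which is complemented by the very definition of a topoframe, extremal disconnectedness of $L$ yields only the trivial identity $s^{**}\vee s^{*}=s\vee s'=\top$. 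Concretely, let $X=D\cup\{\infty\}$ be the one-point Lindel\"{o}fication of an uncountable discrete set $D$, $L=\mathcal{P}(X)$, $\tau=\mathfrak{O}X$. Then $L$ is a complete Boolean algebra, hence an extremally disconnected frame, and $L_\tau$ is a $P$-topoframe ($X$ is a $P$-space); yet for the orthogonal family $S=\{f_{\{d\}}: d\in U\}$ (as in Proposition \ref{inj}), with $U\subseteq D$ uncountable and co-uncountable, one gets $s=U$ and $s'=(D\setminus U)\cup\{\infty\}$, which is not open. Worse, the gap cannot be repaired along your route at all: in this example $\mathcal{R}L_\tau\cong C(X)$, and by the Estaji--Karamzadeh theorem quoted in the introduction $C(X)$ is \emph{not} selfinjective, since $X$ is a $P$-space that is not extremally disconnected. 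So the hypothesis that genuinely governs selfinjectivity is extremal disconnectedness of $\tau$ (equivalently, of the topoframe $L_\tau$, Proposition \ref{ex}), which is how Theorem \ref{inj60} is formulated --- not extremal disconnectedness of the ambient frame $L$.

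For comparison, the paper's own proof does not attempt to show $s'\in\tau$: it changes the separating element, setting $h(X)=\bigl(\bigvee_{f\in S}f(X\setminus\{0\})\bigr)^{**}\vee s^{*}$ for $0\in X$ and $h(X)=\bigl(\bigvee_{f\in S}f(X)\bigr)^{**}$ otherwise, invoking extremal disconnectedness of $L$ in the verification that $h$ is a frame map, and then deducing $hf=f^2$ from $hf\geq f^2$ together with regularity of $\mathcal{P}(\mathbb{R})$. You should be aware, however, that this device runs into the same counterexample: the paper's join-preservation step tacitly uses $\bigvee_i a_i^{**}=(\bigvee_i a_i)^{**}$, which fails even in extremally disconnected frames, and the membership $h(\rrbracket p,q\llbracket)\in\tau$ is never checked there either. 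So the obstruction you isolated is a genuine fault line in the statement as phrased, not merely in your argument. Finally, your discussion of the failure of the converse has the right idea (decouple $\tau$ from $L$) but stops short of an example; the paper's is concrete and simple: $L=\mathfrak{O}\mathbb{R}$ forces $\tau=\{\bot,\top\}$, whence $\mathcal{R}L_\tau\cong\mathbb{R}$ is a field, hence selfinjective, while $\mathfrak{O}\mathbb{R}$ is not extremally disconnected.
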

\begin{proof}
 Let $S\cup T\subseteq \mathcal{R}L_{\tau}$ be an orthogonal
 set  with $S\cap T=\emptyset$ and
 $s=\bigvee_{f\in S}coz(f)$.
 We are to find an element in $\mathcal{R}L_{\tau}$ that separates $S$ from $T$.
We define $h:\mathcal{P}(\mathbb{R})\rightarrow L$ by
\[
 h (X)=\left\{
\begin{array}{ll}
 (\bigvee_{f\in S}f(X\setminus \{0\}))^{**}\vee s^*& \mbox{if $0\in  X$}\\[1mm]
  (\bigvee_{f\in S}f(X))^{**}&  \mbox{if $0\not\in  X$}
\end{array} \right.
\]
for every $X\subseteq \mathbb R$.
First of all, we show that
 $h\in\mathcal{R}L_{\tau}$.
 It is clear that
 \begin{center}
 $h(\emptyset)=  \bigvee_{f\in S}f(\emptyset)=\bot$
and
$h(\mathbb R)= (\bigvee_{f\in S}f(\mathbb R\setminus \{0\}))^{**}\vee s^*=s^{**}\vee s^*=\top$, \end{center}
since $L$ is an extremally  disconnected frame.
 Consider $X, Y\subseteq \mathbb R$.
If $0\in X\cap Y$, then
\[
\begin{array}{lll}
h (X)\wedge h (Y)
&=&  [ (\bigvee_{f\in S} f(X\setminus \{0\}))^{**}\vee s^*]\wedge [  (\bigvee_{f\in S}f(Y \setminus \{0\}))^{**}\vee s^*]\\[2mm]
 &=&
 ( \bigvee_{f\in S}f((X\cap Y)\setminus \{0\}))^{**}\vee s^*\\[2mm]
 &=&h( X\cap Y).  \\[2mm]
\end{array}
\]
If $0\in X$ and $0\not\in Y$, then
\[
\begin{array}{lll}
h (X)\wedge h (Y)
&=&[ (\bigvee_{f\in S} f(X\setminus \{0\}))^{**}\vee s^*]\wedge  (\bigvee_{f\in S}f(Y))^{**}\\[2mm]
 &=&
( \bigvee_{f\in S} f( X\cap Y) )^{**}
 \vee [s^*\wedge ( \bigvee_{f\in S} f(Y))^{**}]\\[2mm]
 &=&( \bigvee_{f\in S} f( X\cap Y) )^{**}
 \vee \bot \\[2mm]
 &=&h( X\cap Y).  \\[2mm]
\end{array}
\]
If $0\not\in X$ and $0\not\in Y$, then
\[
h(X)\wedge h(Y)
= ( \bigvee_{f\in S} f(X))^{**}\wedge  (\bigvee_{f\in S} f(Y) )^{**}
 = (\bigvee_{f\in S} f( X\cap Y))^{**}
 =h( X\cap Y).
\]
Hence $h$ preserves all finite meets.
 Consider $ \{X_i\}_{i\in I}\subseteq P(\mathbb{R})$.
If $0\not\in \bigcup_{i\in I}X_i$, then
\[
\bigvee_{i\in I}h(X_i)
= (\bigvee_{i\in I}  \bigvee_{f\in S}f(X_i) )^{**}
 = (\bigvee_{f\in S} f( \bigcup_{i\in I}X_i ) )^{**}
 =h( \bigcup_{i\in I}X_i).
\]
If $0\in \bigcup_{i\in I}X_i$, then
\[
\begin{array}{lll}
\bigvee_{i\in I}h (X_i)
&=& \bigvee_{\substack{i\in I,\\0\not\in X_i}}( \bigvee_{f\in S}  f(X_i))^{**}
\vee
[s^*\vee \bigvee_{\substack{i\in I,\\0\in X_i}} ( \bigvee_{f\in S} f(X_i\setminus \{0\}))^{**}]
\\[2mm]
 &=& s^*\vee
 (\bigvee_{f\in S} f(\bigcup_{i\in I}X_i \setminus \{0\}) )^{**}
 \\[2mm]
 &=&h( \bigcup_{i\in I}X_i).  \\[2mm]
\end{array}
\]
Hence $h$ preserves arbitrary joins.
Therefore,  $h\in \mathcal{R}L_{\tau}$.
 Now, we show that $h\in Ann(T)$.
 Consider $g\in T$ and
$t=\bigvee_{f\in S}coz(f)$.
 Then, by Lemma \ref{inj30}, we have
\[
\begin{array}{lll}
z(hg)
&=& z(h)\vee z(g)        \\[2mm]
&= & s^* \vee z(g)       \\[2mm]
 &\geq & t   \vee z(g)      \\[2mm]
&\geq &  coz(g) \vee z(g)      \\[2mm]
&=& \top,        \\[2mm]
 \end{array}
\]
which follows that $hg={\bf0}$.
Therefore,   $h\in Ann(T)$.
Since $z(h)=s^*$, we conclude from
Lemma \ref{inj30} that
$z(fh)=z(f)\vee s^*=z(f)=z(f^2)$,
for every $f\in S$.
Consider $0\not=x\in \mathbb R$.
Then
\[
\begin{array}{lll}
hf(\{x\})
&=&\bigvee \{h(\{y\})\wedge  f(\{\frac xy\}): 0\not=y\in \mathbb R \}\\[2mm]
&=&\bigvee \{ (\bigvee_{f\in S} f(\{y\}) )^{**}\wedge  f(\{\frac xy\}): 0\not=y\in \mathbb R \}\\[2mm]
&\geq &\bigvee \{( \bigvee_{f\in S} f(\{y\}))\wedge  f(\{\frac xy\}): 0\not=y\in \mathbb R \}\\[2mm]
&=&\bigvee \{f(\{y\})\wedge   f(\{\frac x{y}\}): y\in \mathbb R\setminus \{0\} \}  \\[2mm]
&=& f^2(\{x\}) .\\[2mm]
\end{array}
\]
Hence  $hf(X) \geq  f^2(X)$, for every  $X\in P(\mathbb{R})$.
Since $P(\mathbb{R})$ is a regular frame and also $hf$ and $f^2$ are frame maps,
 we conclude that $hf=f^2$.
Therefore,  $h$ separates $S$ from $T$.
Now, by Theorem \ref{inj20} and Lemma \ref{kar25}, $\mathcal{R}L_{\tau}$ is a selfinjective ring.

Now, we show that the converse is not true.
Let $L$ be a connected frame, say $L=\mathfrak{O}\mathbb R$, then $L$ has only two complemented elements, that is, $\bot $ and $\top$ are only two complemented elements of $L$.
Consider $\textbf{0}\not=f\in \mathcal{R}L_{\tau}$.
Then $coz(f)=\top$ which follows that there exists a $0\not=r\in\mathbb R$ such that $f(\{r\})\not=\bot$.
We claim that $f= \textbf{r}$.
In order to prove our claim, we consider $A\subseteq \mathbb R$
 such that $r\in A$.
 From $f(A)$ which is the complement of $f( \mathbb R\setminus A)$ in $L$ and $f(A)=\top$, we conclude that
$f( \mathbb R\setminus A)=\bot$.
Then $f= \textbf{r}$.
Therefore, $\mathcal{R}L_{\tau}\cong \mathbb R$.
Since $\mathbb R$ is a field,
we infer that $\mathcal{R}L_{\tau}$ is a selfinjective ring, but $L$ is not  an extremally
 disconnected frame.
\end{proof}
Recall from \cite{zarghani2017ring} that  the {\it semi-Heyting operation} $\rightarrow_\tau$  on a topoframe $L_{\tau}$ is defined by
$$a\rightarrow_\tau b=\bigvee\{ x\in\tau \mid a\wedge x\leq b\},$$
where $a, b\in L$.
We put $a^\bot=a\rightarrow_\tau \bot$, for every $a\in L$. Clearly if $a\in\tau$, then $a^\bot=a^*$,
where the peudocomplement of $a$ is formed in $\tau$. It follows that for any topoframe $L_{\tau}$,
$\tau$ is a extremally disconnected frame if and only if $a^{\bot\bot}\vee a^\bot=\top$ for all $a \in\tau$.
Also, ${(\bigvee_{i\in I} a_i)^\bot=\bigwedge_{i\in I} a_i^\bot}$ where  $\{a_i\}_{i\in I}\subseteq\tau$.

The {\it closure} of $p\in L$ in a topoframe $(L, \tau)$ is the element
\begin{equation*}
\overline p=Cl_{L_{ \tau}}(p)=\bigwedge^{\tau^\prime}\{x\in\tau^\prime : p\leq x\}.
\end{equation*}
It is easy to see that for every $a\in\tau$, $(\overline a)^\prime=a^\bot=a^*$,
where the peudocomplement of $a$ is formed in $\tau$.
Thus if $\overline a\in\tau$, then $\overline a\leq a^{\bot\bot}=a^{**}$, for more details see  \cite{zarghani2017ring}.
\begin{definition}
A topoframe $L_{\tau}$ is called extremally disconnected if $\overline{a}\in  \tau$ for all $a \in  \tau$.
\end{definition}

 The {\it interior} of $p\in L$ in a topoframe $(L, \tau)$
is the element
\begin{equation*}
 p^o=Int_{L_{ \tau}}(p)=\bigvee\{x\in\tau : x\leq p\}.
\end{equation*}
 It is easy to see that $(\overline a)^o=a^{\bot\bot}=a^{**}$ for every $a\in\tau$, where the peudocomplement of $a$ is formed in $\tau$ and also $p\in\tau$ if and only if $p^o = p$, for more details see  \cite{zarghani2017ring}.
\begin{proposition}\label{ex}
For any topoframe $L_{\tau}$, $\tau$ is an extremally disconnected frame if and only if $L_{\tau}$ is an extremally disconnected topoframe.
\end{proposition}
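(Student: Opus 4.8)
The plan is to read the statement off directly from the two identities recorded just above the definition, namely that for every $a\in\tau$ one has $(\overline a)'=a^\bot$ and $(\overline a)^o=a^{\bot\bot}$, together with the uniqueness of complements in the distributive frame $L$. Recall that, by the discussion preceding the definition, $\tau$ is an extremally disconnected frame exactly when $a^{\bot\bot}\vee a^\bot=\top$ for all $a\in\tau$, whereas $L_\tau$ is an extremally disconnected topoframe exactly when $\overline a\in\tau$ for all $a\in\tau$. Both conditions are quantified over $a\in\tau$, so it suffices to argue the equivalence pointwise for a fixed $a$.

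For the implication that frame extremal disconnectedness forces topoframe extremal disconnectedness, I would fix $a\in\tau$ and first observe that $\overline a$ is a closed element, hence complemented in $L$, so the expression $(\overline a)'=a^\bot$ is meaningful and yields $\overline a=(a^\bot)'$, the complement of $a^\bot$ in $L$. On the other hand $a^{\bot\bot}$ is the pseudocomplement of $a^\bot$ formed in $\tau$, so $a^{\bot\bot}\wedge a^\bot=\bot$, while the hypothesis gives $a^{\bot\bot}\vee a^\bot=\top$. Thus $a^{\bot\bot}$ is also a complement of $a^\bot$ in $L$. Since $L$ is a frame and hence a distributive lattice, complements are unique, so $\overline a=a^{\bot\bot}$. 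As $a^{\bot\bot}\in\tau$, this shows $\overline a\in\tau$, as required.

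For the converse, I would fix $a\in\tau$ and assume $\overline a\in\tau$. Then the criterion $p\in\tau$ iff $p^o=p$ gives $(\overline a)^o=\overline a$, while the recorded identity $(\overline a)^o=a^{\bot\bot}$ gives $a^{\bot\bot}=\overline a$. Using again that $\overline a=(a^\bot)'$ is the complement of $a^\bot$ in $L$, I obtain $\overline a\vee a^\bot=\top$, that is $a^{\bot\bot}\vee a^\bot=\top$. Since $a$ was arbitrary, $\tau$ is an extremally disconnected frame.

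The argument is essentially uniqueness-of-complements bookkeeping, so no real computation is needed once the two closure/interior identities are in hand. The only point that deserves care is the implicit claim that $\overline a$ is complemented in $L$ (so that $(\overline a)'$ makes sense and equals $a^\bot$): this holds because $\overline a$ is a closed element, i.e.\ lies in $\tau'$, and every such element is complemented by the very definition of a topoframe. This is the step I would state explicitly, since it is what ties the order-theoretic closure to the complement structure of $L$ and makes the pointwise equivalence go through.
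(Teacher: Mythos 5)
Your proof is correct and follows essentially the same route as the paper's: both directions rest on the recorded identities $(\overline a)'=a^\bot$ and $(\overline a)^o=a^{\bot\bot}$ together with uniqueness of complements in the distributive frame $L$ (the paper phrases the necessity step as $(\overline a)^o=((\overline a)')'=\overline a$, which is your $\overline a=a^{\bot\bot}$ in different notation). Your explicit remark that $\overline a\in\tau'$ is complemented, so that $(\overline a)'$ makes sense, is exactly the point the paper uses implicitly.
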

\begin{proof}
Necessity. We show that $(\overline a)^o=\overline a$ for every $a\in\tau$.
Given $a \in  \tau$ we have $a^\bot\vee a^{\bot\bot}=\top$ and hence $(\overline a)^\prime\vee(\overline a)^o=\top$.
On the other hand $(\overline a)^\prime\wedge(\overline a)^o\leq(\overline a)^\prime\wedge\overline a=\bot$,
it follows that $(\overline a)^\prime\wedge(\overline a)^o=\bot$.
Consequently, $(\overline a)^o=((\overline a)^\prime)^\prime=\overline a$.

Sufficiency.  Suppose $a \in  \tau$ hence $\overline a\in\tau$. It follows that
$$\top=(\overline a)^\prime\vee \overline a=a^\bot \vee \overline a\leq a^\bot\vee a^{\bot\bot}.$$
Consequently, $a^\bot\vee a^{\bot\bot}=\top$.
\end{proof}
  Before the following proposition is proposed, we first recall some definitions.
If $I$ and $J$ are ideals in a ring $A$ we say $I$ is
essential in $J$ if $I\subseteq J$ and every nonzero ideal inside $J$
intersects $I$ nontrivially, and recall that when we say $I$ is essential, we mean it is
essential in $A$.  An ideal $I$ in a ring $A$ is called closed ideal
(complement) if it is not essential in a larger ideal and a ring $A$ is said to
be $CS$-ring if every closed ideal is a direct summand, see \cite{SmithTercan1993}.
A ring $A$ is called a Baer ring if for any subset $S$ of $A$, we have $Ann_R(S)=eA$, where $e^2=e$.

A topoframe $L_\tau$ is said to be {\it completely regular} if for every $a\in\tau$
there exists $\{f_\lambda\}_{\lambda\in \Lambda}\subseteq \mathcal{R}L_\tau$ such that $a=\bigvee_{\lambda\in \Lambda}  coz(f_\lambda)  $.
In \cite{zarghani2017ring} proved that for any topoframe $L_\tau$, there exists a completely regular topoframe
$M_w$ such that  $\mathcal{R}L_\tau$ is isomorphic to $\mathcal{R}M_w$.
\begin{proposition} \label{inj40}
The following statements are equivalent for any completely regular topoframe $L_\tau$.
\begin{enumerate}
\item[{\rm (1)}]   $L_\tau$ is an extremally disconnected topoframe.
\item[{\rm (2)}]   $\mathcal{R}L_\tau$ is a Baer ring.
\item[{\rm (3)}]   Every nonzero ideal in $\mathcal{R}L_\tau$ is essential in a principal ideal generated by an idempotent.
\item[{\rm (4)}]     $\mathcal{R}L_\tau$ is a $CS$-ring.
\end{enumerate}
\end{proposition}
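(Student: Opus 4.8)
The plan is to prove the cycle of implications $(1)\Rightarrow(2)\Rightarrow(3)\Rightarrow(4)\Rightarrow(1)$, the whole argument resting on a dictionary between ring data of $\mathcal{R}L_\tau$ and lattice data of $\tau$. First I would record the tools used throughout. For $g,h\in\mathcal{R}L_\tau$ one has $coz(gh)=coz(g)\wedge coz(h)$, obtained by taking complements in the identity $z(gh)=z(g)\vee z(h)$; and, by the description of idempotents together with Proposition \ref{inj}, the idempotents of $\mathcal{R}L_\tau$ are exactly the functions $f_b$ with $b\in\tau$ complemented and $b'\in\tau$, where $coz(f_b)=b$ and $z(f_b)=b'$. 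The central observation is that for any $S\subseteq\mathcal{R}L_\tau$, setting $a=\bigvee_{f\in S}coz(f)\in\tau$, frame distributivity gives $g\in Ann(S)$ if and only if $coz(g)\wedge a=\bot$, i.e. $coz(g)\le a^\bot$; complete regularity is what lets me pass freely between an ideal and the element of $\tau$ whose cozeros fill it up.

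For $(1)\Rightarrow(2)$, given $S$ I would show $Ann(S)=(f_{a^\bot})$. Extremal disconnectedness of $\tau$, available through Proposition \ref{ex}, makes $a^\bot$ complemented in $\tau$ with complement $a^{\bot\bot}\in\tau$, so $f_{a^\bot}$ is a genuine idempotent. The inclusion $(f_{a^\bot})\subseteq Ann(S)$ is immediate, since $coz(hf_{a^\bot})\le a^\bot$ and $a^\bot\wedge a=\bot$; for the reverse inclusion, $g\in Ann(S)$ gives $coz(g)\le a^\bot=coz(f_{a^\bot})$, whence $gf_{a^\bot}=g$ by Lemma \ref{inj10}(2) (or directly from Proposition \ref{inj}(2), using $z(g)\ge a^{\bot\bot}$ in the case $0\in X$). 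Thus every annihilator is generated by an idempotent and $\mathcal{R}L_\tau$ is Baer.

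The two middle implications are purely ring-theoretic and use only that $\mathcal{R}L_\tau$ is reduced. For $(2)\Rightarrow(3)$, given a nonzero ideal $I$, Baerness yields $Ann(I)=eA$ and hence $Ann(Ann(I))=(1-e)A$; in a reduced ring $Ann(I)\cap Ann(Ann(I))=(0)$, and the standard argument ($J\cap I=(0)\Rightarrow JI=(0)\Rightarrow J\subseteq Ann(I)$) shows $I$ is essential in the idempotent-generated ideal $(1-e)A$. For $(3)\Rightarrow(4)$, if $K$ is a closed (complement) ideal then either $K=(0)$, a trivial summand, or by $(3)$ the nonzero $K$ is essential in some $(e)$; since $K$ is not essential in any strictly larger ideal, $K=(e)$, a direct summand. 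Hence $\mathcal{R}L_\tau$ is a $CS$-ring.

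The implication $(4)\Rightarrow(1)$ is the crux, and the step I expect to be the main obstacle. Fix $a\in\tau$ and set $I_a=\{g\in\mathcal{R}L_\tau: coz(g)\le a\}$, an ideal. Complete regularity gives $\bigvee_{g\in I_a}coz(g)=a$, so by the dictionary $Ann(I_a)=I_{a^\bot}$ and then $Ann(Ann(I_a))=I_{a^{\bot\bot}}$; being a double annihilator in a reduced ring, $I_{a^{\bot\bot}}$ is a closed ideal. The $CS$ hypothesis makes it a direct summand $eA$, and since $eA=(f_b)=\{k:coz(k)\le b\}$ for $b=coz(e)$ complemented in $\tau$, I would recover $b=a^{\bot\bot}$ by comparing the joins $\bigvee_{k\in I_{a^{\bot\bot}}}coz(k)=a^{\bot\bot}$ and $\bigvee_{k\in(f_b)}coz(k)=b$, again via complete regularity. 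Therefore $a^{\bot\bot}$ is complemented in $\tau$; its complement is the pseudocomplement $(a^{\bot\bot})^\bot=a^{\bot\bot\bot}=a^\bot$, so $a^\bot\vee a^{\bot\bot}=\top$ for every $a\in\tau$, which is extremal disconnectedness of $\tau$ and hence of $L_\tau$ by Proposition \ref{ex}. The delicate point is exactly this identification: $CS$ only delivers \emph{some} idempotent, and the sole leverage for forcing its cozero to equal the abstract bound $a^{\bot\bot}$ is complete regularity, so the argument genuinely collapses without that hypothesis.
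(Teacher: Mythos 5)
Your proposal is correct, and it runs on the same engine as the paper's proof: the dictionary $g\in Ann(S)\Leftrightarrow coz(g)\le\bigl(\bigvee_{f\in S}coz(f)\bigr)^\bot$, the identification of idempotents with the functions $f_b$ (Proposition \ref{inj} together with Remark \ref{inj18}), the fact that annihilator ideals in a reduced ring are closed, and complete regularity to pass between elements of $\tau$ and joins of cozeros. The structural difference is in how the cycle is closed. The paper proves $(4)\Rightarrow(2)$ (every annihilator is a closed ideal, so the $CS$ property makes it a summand, hence idempotent-generated, hence Baer) and then separately $(2)\Rightarrow(1)$, applying Baerness to a family $\{f_t\}$ with $a=\bigvee_t coz(f_t)$ and showing the resulting idempotent satisfies $coz(e)=a^\bot$; you instead go directly $(4)\Rightarrow(1)$ by applying the $CS$ hypothesis to the single closed ideal $Ann(Ann(I_a))=I_{a^{\bot\bot}}$ and identifying $coz(e)=a^{\bot\bot}$. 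Your last leg is essentially the composite of the paper's two steps specialized to one ideal: the paper's decomposition buys the reusable intermediate fact ``$CS$ implies Baer'' as part of the equivalence chain, while yours is a marginally shorter route to extremal disconnectedness that never re-enters $(2)$. One further small difference, in your favor: in $(1)\Rightarrow(2)$ you compute $gf_{a^\bot}=g$ exactly (using $z(g)\ge a^{\bot\bot}$ in the case $0\in X$), whereas the paper only obtains $gf_{s^\bot}(X)\ge g(X)$ when $0\in X$ and must then invoke the fact that comparable frame maps out of the regular frame $\mathcal{P}(\mathbb{R})$ coincide.
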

\begin{proof}
(1)$\Rightarrow$(2).
Let $S\subseteq \mathcal{R}L_\tau$, we are to show that $AnnS=e\mathcal{R}L_\tau$, where $e^2=e$.
We put
$s=\bigvee_{f\in S}coz(f)$. Since $s\in\tau$ and $L_\tau$ is extremally disconnected,
we infer that $s^\bot\vee s^{\bot\bot}=s^*\vee s^{**}=\top$, where the peudocomplement of $s$ is formed in $\tau$,
which follows that $s^\bot$ and $(s^\bot)^\prime$ belong to $\tau$.
Consider $g\in Ann(S)$. Then
$coz(f)\wedge coz(g)=coz(fg)=coz({\bf0})=\bot,$
which follows that $coz(f)\leq coz(g)^\bot$, for every $f\in S$.
Hence $s\leq  coz(g)^\bot$ and so
$$coz(g)\leq coz(g)^{\bot\bot}\leq s^\bot.$$
Consider $X\subseteq \mathbb R$ and $f\in S$.
If $0\not\in X$, then, by Proposition \ref{inj},
$g f_{s^\bot}(X)=s^{\bot}\wedge g(X)= g(X)$,
because $g(X)\leq coz(g)\leq s^\bot$.
 If $0\in X$, then
$g f_{s^\bot}(X)=(s^{\bot})^\prime\vee g(X)\geq g(X)$, by Proposition \ref{inj}.
 Since $g$ and $f_{s^\bot}$ are frame maps and  $\mathcal P(\mathbb R)$ is the regular frame,
we conclude that $g f_{s^\bot}=g$ which means that $g \in f_{s^\bot}\mathcal{R}L_\tau$.
 Hence $Ann(S)\subseteq   f_{s^\bot}\mathcal{R}L_\tau$.
Now, suppose that $f\in S$ and hence $coz(f)\leq s$, consequently
$$ coz( f f_{s^\bot})=
 coz( f)\wedge coz( f_{s^\bot})
 \leq  coz( f)^{\bot\bot}\wedge s^\bot
 \leq s^{\bot\bot}\wedge s^\bot =\bot,$$
 it follows that $f f_{s^\bot}={\bf0}$.  Thus $f_{s^\bot}\in Ann(S)$ and so
  $Ann(S)= f_{s^\bot}\mathcal{R}L_\tau$. Therefore  $\mathcal{R}L_\tau$ is a Baer ring.

(2)$\Rightarrow$(3).
Let $I$ be a nonzero ideal in $\mathcal{R}L_\tau$, then there is an idempotent element $e$
in $\mathcal{R}L_\tau$ such that $Ann(I)= e\mathcal{R}L_\tau=Ann((1-e)\mathcal{R}L_\tau)$,
which follows that
$f=f(1-e)\in(1-e)\mathcal{R}L_\tau\cap I$, for every $f\in I$. Hence $I$ is essential in $(1-e)\mathcal{R}L_\tau$.

(3)$\Rightarrow$(4).
Let $I$ be a closed ideal  in $\mathcal{R}L_\tau$,
then  there is an idempotent element $e$  in $\mathcal{R}L_\tau$ such that $I$ is essential in $e\mathcal{R}L_\tau$.

(4)$\Rightarrow$(2).
Consider $S\subseteq \mathcal{R}L_\tau$ and $I=Ann(S)$.
 We claim that the ideal $Ann(S)$ is a closed ideal in $\mathcal{R}L_\tau$.
Let $Ann(S)$ be essential in a larger ideal $J$, then $SJ\neq ({\bf0})$
implies that $SJ\cap Ann(S)\neq ({\bf0})$, but $(SJ\cap Ann(S))^2=({\bf0})$, which is impossible,
since $\mathcal{R}L_\tau$ is a reduced ring. This shows that $Ann(S)$ is a closed ideal and by (4), $I$ is generated by an idempotent.

(2)$\Rightarrow$(1).
Consider $a\in \tau$, then there is
$\{f_t\}_{t\in T}\subseteq \mathcal{R}L_\tau$ such that
$a=\bigvee_{t\in T}coz(f_t)$.
Since $\mathcal{R}L_\tau$ is a Baer ring, we conclude that
there is an idempotent element $e\in \mathcal{R}L_\tau$ such that $Ann(\{f_t\}_{t\in T})=e\mathcal{R}L_\tau$,
which follows that for every $t\in T$
$$ coz(e)\wedge coz(f_t)
=coz(ef_t) = coz({\bf0})=\bot
\Rightarrow coz(e)\leq coz(f_t)^\bot.$$
Then $coz(e)\leq \bigwedge_{t\in T} coz(f_t)^\bot=a^\bot$.
Since $coz(e)\vee coz(1-e)=\top$ and  $coz(e)\wedge coz(1-e)=\bot$,
we conclude that $a^{\bot\bot}\leq coz(e)^\bot=coz(1-e)$.
Suppose that $\{g_k\}_{k\in K}\subseteq \mathcal{R}L_\tau$ such that  $a^\bot=\bigvee_{k\in K}coz(g_k)$.
For every $(t,k)\in T\times K$, we have
$$coz(f_tg_k)=coz(f_t)\wedge coz(g_k)\leq coz(f_t)^{\bot\bot}\wedge a^\bot\leq a^{\bot\bot}\wedge a^\bot=\bot,$$
and so $f_tg_k={\bf0}$. Then $g_k\in Ann(\{f_t\}_{t\in T})=e\mathcal{R}L_\tau$,
which follows that there is a $h_k\in \mathcal{R}L_\tau$
such that $g_k=eh_k$, for every $k\in K$. Therefore, $coz(g_k)=coz(eh_k)\leq coz(e)$
and so $a^\bot=\bigvee_{k\in K}coz(g_k)\leq coz(e).$
Consequently, $a^\bot=coz(e)$ and we immediate have $a^\bot\vee a^{\bot\bot}=coz(e)\vee coz(1-e)=\top$.
\end{proof}
In order to state the following theorem we need some background.
 A lattice $A$ is called upper continuous if $A$ is complete
and $a\wedge (\vee b_i)=\vee(a\wedge b_i)$ for all $a\in A$ and all linearly
ordered subset $\{b_i\}\subseteq A$.
 A regular ring $R$ is called continuous
 if the lattice of all principal ideals is upper continuous.

We recall from \cite[Corollary 13.4]{Goodearl1979} that
a regular ring $R$ is  continuous if and only if every
 ideal of $R$ is essential in a principal right ideal of $R$.
 Also, we recall  from \cite[Corollary 13.5]{Goodearl1979} that
 every regular  self injective ring is continuous.
 Also, every  reduced self injective ring is regular ring which is Baer ring, see \cite[Proposition 1.7]{Matlis1983}.

\begin{proposition} {\rm \cite{Berberian1972}}
  \label{inj55}
The following statements are equivalent.
\begin{enumerate}
\item[{\rm (1)}]  $A$ is a Baer ring.
\item[{\rm (2)}]  $A$ is a p.p. ring which is
also  the Boolean algebra $B(A)$ of idempotents in $A$ is complete.
\item[{\rm (3)}]  $A$ is a p.p. ring
and every set of orthogonal idempotents in $A$ has  a supremum.
\end{enumerate}
\end{proposition}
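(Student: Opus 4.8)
The plan is to prove the two equivalences $(1)\Leftrightarrow(2)$ and $(2)\Leftrightarrow(3)$, working throughout in the Boolean algebra $B(A)$ of idempotents of the commutative ring $A$, where $e\wedge f=ef$, $e\vee f=e+f-ef$, the complement is $e^{c}=1-e$, and the order is $e\le f\iff ef=e\iff eA\subseteq fA$. Two elementary facts drive everything: for an idempotent $e$ one has $Ann(e)=(1-e)A$; and for any $S\subseteq A$ one has $Ann(S)=\bigcap_{a\in S}Ann(a)$. I also use that \emph{p.p.} means $Ann(a)=eA$ for an idempotent $e$, for each $a\in A$ (equivalently, every principal ideal is projective), and that on $B(A)$ the map $e\mapsto 1-e$ is an order-reversing bijection, so that suprema and infima are interchangeable via De Morgan.

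For $(1)\Rightarrow(2)$ I would first note that a Baer ring is trivially p.p. (take singletons $S=\{a\}$). To see $B(A)$ is complete, take a family $\{e_i\}_{i\in I}$ of idempotents and write $Ann(\{e_i\})=gA$ with $g$ idempotent; since $ge_i=0$ we get $e_i\le 1-g$, and any idempotent upper bound $h$ of the $e_i$ satisfies $e_i(1-h)=0$, hence $1-h\in Ann(\{e_i\})=gA$, whence $1-g\le h$. Thus $\sup_i e_i=1-g$ exists, and complementation supplies infima, so $B(A)$ is complete. Conversely, for $(2)\Rightarrow(1)$, given $S\subseteq A$ I use p.p. to write $Ann(a)=e_aA$ for each $a$, so $Ann(S)=\bigcap_a e_aA$, and I set $e=\inf_a e_a$ (available by completeness). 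The inclusion $eA\subseteq\bigcap_a e_aA$ is immediate from $e\le e_a$; for the reverse, if $x$ lies in the intersection then $xe_a=x$, so $1-e_a\in Ann(x)=e'A$ by p.p., forcing $e'\ge\sup_a(1-e_a)=1-e$ and therefore $x=x(1-e')=x(1-e')e=xe$. Hence $Ann(S)=eA$ and $A$ is Baer.

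The equivalence $(2)\Leftrightarrow(3)$ reduces to $(3)\Rightarrow(2)$, as a complete Boolean algebra certainly has suprema of orthogonal sets. Here the strategy is to pass to the upward-directed family of all finite joins $e_F=\bigvee_{i\in F}e_i$ over finite $F\subseteq I$, whose supremum is exactly $\sup_i e_i$, and then to prove the key lemma: every upward-directed family $\{d_k\}$ of idempotents has a supremum whenever orthogonal families do. For the lemma I would choose, by Zorn's Lemma, a maximal orthogonal family $\{g_\lambda\}$ of nonzero idempotents with each $g_\lambda\le d_{k(\lambda)}$ for some index, put $g=\sup_\lambda g_\lambda$, and argue $g=\sup_k d_k$: if some $d_k(1-g)\ne 0$, it is a nonzero idempotent below $d_k$ and orthogonal to every $g_\lambda$ (since $g_\lambda g=g_\lambda$ gives $g_\lambda d_k(1-g)=0$), contradicting maximality; hence $d_k\le g$ for all $k$, while minimality of $g$ among upper bounds is clear because each $g_\lambda\le d_{k(\lambda)}$. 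Applying the lemma to $\{e_F\}$ produces all suprema, and complementation then yields all infima, so $B(A)$ is complete.

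I expect the Zorn's-Lemma reduction in $(3)\Rightarrow(2)$ to be the main obstacle. The naive transfinite orthogonalization $g_\alpha=e_{i_\alpha}\wedge\bigl(1-\bigvee_{\beta<\alpha}e_{i_\beta}\bigr)$ already presupposes the partial suprema one is trying to construct, so it is circular; routing the argument through a \emph{maximal} orthogonal family below the given directed family is what breaks this circularity and is the delicate point to get right.
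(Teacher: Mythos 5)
Your proof is correct, but there is no in-paper argument to compare it against: the paper states Proposition \ref{inj55} as a quoted result from Berberian \cite{Berberian1972} and gives no proof, so your write-up is a self-contained replacement rather than a variant of the paper's reasoning. Two observations on how your route differs from the source it replaces. First, your argument is tailored to the paper's standing convention that rings are commutative with identity -- you use commutativity freely (products of idempotents are idempotent, $Ann(e)=(1-e)A$, $Ann(S)=\bigcap_{a\in S}Ann(a)$ with all annihilators two-sided) -- whereas Berberian's treatment is set up for general (Baer $*$-)rings via annihilator right ideals and lattices of projections; in the commutative context your elementary Boolean-algebra argument is entirely adequate and is the cleaner choice. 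Second, the step you flag as ``the main obstacle'' is in fact superfluous: your key lemma never uses directedness, so the reduction through the finite joins $e_F$ can be deleted. The maximal-orthogonal-family argument applies verbatim to an \emph{arbitrary} family $\{d_k\}$ of idempotents: take, by Zorn, a maximal orthogonal family $\{g_\lambda\}$ of nonzero idempotents with each $g_\lambda\le d_{k(\lambda)}$, and put $g=\sup_\lambda g_\lambda$, which exists by (3). If some $d_k(1-g)\neq 0$, it is a nonzero idempotent below $d_k$ and orthogonal to every $g_\lambda$ (from $g_\lambda(1-g)=0$), and it cannot already coincide with some $g_{\lambda_0}$, since then $g_{\lambda_0}=g_{\lambda_0}^2=g_{\lambda_0}d_k(1-g)=d_k\,g_{\lambda_0}(1-g)=0$; so the family enlarges strictly, contradicting maximality. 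Hence $g$ is an upper bound of $\{d_k\}$, and it is the least one because each $g_\lambda$ lies below some $d_k$. This gives (3)$\Rightarrow$(2) in one stroke, with De Morgan supplying infima. The remaining implications -- (1)$\Rightarrow$(2) via $\sup_i e_i=1-g$ where $Ann(\{e_i\})=gA$, and (2)$\Rightarrow$(1) via $Ann(S)=\bigl(\inf_{a\in S} e_a\bigr)A$, including the verification $x=x(1-e')=x(1-e')e=xe$ -- are sound as written.
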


\begin{theorem}  \label{inj60}
For any completely regular topoframe $L_\tau$, the following statements are equivalent.
\begin{enumerate}
\item[{\rm (1)}]   $\mathcal{R}L_\tau$ is a Baer regular ring.
\item[{\rm (2)}]   $\mathcal{R}L_\tau$ is a continuous regular ring.
\item[{\rm (3)}]   $\mathcal{R}L_\tau$ is a complete regular ring.
\item[{\rm (4)}]   $L_\tau$ is an extremally disconnected $P$-topoframe.
\item[{\rm (5)}]  $\mathcal{R}L_\tau$ is a  self injective ring.
\end{enumerate}
\end{theorem}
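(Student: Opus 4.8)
The plan is to establish the cycle of implications
$(4)\Rightarrow(5)\Rightarrow(1)\Rightarrow(2)\Rightarrow(3)\Rightarrow(4)$,
resting on Theorem \ref{inj20}, Proposition \ref{inj40}, Proposition \ref{inj55}, and the imported facts that every reduced self injective ring is a regular Baer ring (\cite{Matlis1983}) and that a regular ring is continuous exactly when every ideal is essential in a principal ideal, while every regular self injective ring is continuous (\cite{Goodearl1979}). The guiding observation is that each of (1), (2), (3) explicitly contains regularity and, by \cite{Matlis1983}, so does (5); hence under any one of (1), (2), (3), (5) Theorem \ref{inj20} already forces $L_\tau$ to be a $P$-topoframe. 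Thus the substantive content is only the extremally disconnected / Baer / continuous / complete / self injective layer sitting on top of regularity, and Proposition \ref{inj40} is what ties that layer to extremal disconnectedness. Throughout I read condition (3) as asserting that the Boolean algebra $B(\mathcal{R}L_\tau)$ of idempotents, equivalently the lattice of principal ideals, is complete.

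For the easy arc I would argue as follows. Since $\mathcal{R}L_\tau$ is reduced, (5) gives a regular Baer ring by \cite{Matlis1983}, which is exactly (1). For $(1)\Rightarrow(2)$, Baerness is equivalent, by Proposition \ref{inj40}, to the statement that every nonzero ideal is essential in a principal ideal generated by an idempotent; in a regular ring every principal ideal is idempotent-generated, so every ideal is essential in a principal ideal, and \cite{Goodearl1979} then yields continuity. For $(2)\Rightarrow(3)$ I would simply unwind the definition: a continuous regular ring has upper continuous, in particular complete, lattice of principal ideals, and for a reduced commutative regular ring this lattice is $B(\mathcal{R}L_\tau)$, so (3) holds. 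Finally, for $(3)\Rightarrow(4)$, regularity gives a $P$-topoframe by Theorem \ref{inj20} and makes $\mathcal{R}L_\tau$ a p.p. ring, so a complete $B(\mathcal{R}L_\tau)$ forces Baerness via Proposition \ref{inj55}; Baerness returns extremal disconnectedness by Proposition \ref{inj40}, and the two together are (4).

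The heart of the matter is $(4)\Rightarrow(5)$, for which the imported ring theory is of no help: regular Baer rings are not self injective in general, and it is precisely the function-ring structure of $\mathcal{R}L_\tau$ that saves us. By Lemma \ref{kar25} and Theorem \ref{inj20} it suffices, for a $P$-topoframe, to separate an arbitrary orthogonal set $S\cup T$ with $S\cap T=\emptyset$. I would reproduce the construction of Proposition \ref{inj404} almost verbatim, writing $s=\bigvee_{f\in S}coz(f)\in\tau$ and defining $h\colon\mathcal P(\mathbb R)\to L$ by $h(X)=(\bigvee_{f\in S}f(X\setminus\{0\}))^{\bot\bot}\vee s^{\bot}$ when $0\in X$ and $h(X)=(\bigvee_{f\in S}f(X))^{\bot\bot}$ otherwise, where now the semi-Heyting pseudocomplement $(\cdot)^{\bot}$ in $\tau$ replaces the pseudocomplement $(\cdot)^{*}$ in $L$. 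The point is that Proposition \ref{inj404} needed the ambient frame $L$ to be extremally disconnected, whereas (4) only provides that $\tau$ is extremally disconnected; by Proposition \ref{ex} this is exactly what is encoded in an extremally disconnected topoframe, and it delivers $s^{\bot}\vee s^{\bot\bot}=\top$, which is what the construction consumes.

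The main obstacle, then, is to verify that this $h$ is a genuine frame homomorphism into $L$ that lands in $\tau$ on the rational intervals, so that $h\in\mathcal{R}L_\tau$. Finite meets and the containment $h(\rrbracket p,q\llbracket)\in\tau$ are immediate because $(\cdot)^{\bot\bot}$ takes values in $\tau$ and $s^{\bot}\in\tau$; the delicate part is join preservation, since $a\mapsto a^{\bot\bot}$ is the double-pseudocomplement nucleus on $\tau$ and does not commute with joins in an arbitrary frame. Extremal disconnectedness of $\tau$ is exactly the hypothesis that tames this, making the regularised joins behave, and the computation runs parallel to the one in Proposition \ref{inj404}. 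Once $h\in\mathcal{R}L_\tau$ is secured, I would check $h\in Ann(T)$ through the zero-map identity $z(hg)=z(h)\vee z(g)=s^{\bot}\vee z(g)=\top$ for $g\in T$ (using Lemma \ref{inj30}), and $f^2=fh$ for $f\in S$ by first establishing $hf\geq f^2$ on singletons and then upgrading to equality using that $\mathcal P(\mathbb R)$ is a regular frame and $hf$, $f^2$ are frame maps. This exhibits the element $h$, with $h\in Ann(T)$ and $f^2=fh$ for all $f\in S$, which by the criterion recalled before Lemma \ref{kar25} separates $S$ from $T$, and Lemma \ref{kar25} concludes that $\mathcal{R}L_\tau$ is self injective.
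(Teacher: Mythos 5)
Your proposal takes essentially the same route as the paper: the identical cycle of implications with the identical external inputs --- Matlis for $(5)\Rightarrow(1)$, Goodearl's Corollary 13.4 together with the Baer characterizations (Propositions \ref{inj55} and \ref{inj40}) for $(1)\Rightarrow(2)$, definition-unwinding for $(2)\Rightarrow(3)$, Proposition \ref{inj55} plus Theorem \ref{inj20} and Proposition \ref{inj40} for $(3)\Rightarrow(4)$, and an adaptation of the separation construction of Proposition \ref{inj404} for $(4)\Rightarrow(5)$.

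The one place you go beyond the paper, which dismisses $(4)\Rightarrow(5)$ as ``similar to Proposition \ref{inj404}'', is that you explicitly repair the hypothesis mismatch there: Proposition \ref{inj404} assumes the ambient frame $L$ is extremally disconnected, whereas condition (4) only yields (via Proposition \ref{ex}) that $\tau$ is, and your replacement of $(\cdot)^{*}$ by the semi-Heyting pseudocomplement $(\cdot)^{\bot}$ is exactly the right fix --- legitimately so, because over a $P$-topoframe every value of every $f\in\mathcal{R}L_\tau$ lies in $\tau$ (indeed $f(\{r\})=z(f-\mathbf{r})\in Z(L_\tau)\subseteq\tau$ for each $r\in\mathbb{R}$, and $\tau$ is closed under joins), so the whole construction, including the inequality $a\leq a^{\bot\bot}$ used in checking $hf=f^{2}$, takes place inside the extremally disconnected frame $\tau$, precisely paralleling the paper's intended argument.
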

\begin{proof}
(1)$\Rightarrow$(2).
It is clear by \cite[Corollary 13.4]{Goodearl1979} and
Proposition \ref{inj55}.

(2)$\Rightarrow$(3).
It is obvious.

(3)$\Rightarrow$(4).
Since every regular ring is a  p.p. ring, we conclude  from Proposition \ref{inj55} that  $\mathcal{R}L_\tau$ is a Baer regular ring.
Then, combining Theorem \ref{inj20} and Proposition \ref{inj40} imply that
$L_\tau$ is an extremally disconnected $P$-topoframe.

(4)$\Rightarrow$(5).
The proof is similar to Proposition \ref{inj404}.

(5)$\Rightarrow$(1).
By  \cite[Proposition 1.7.]{Matlis1983},
$\mathcal{R}L_\tau$ is a Baer regular ring.
\end{proof}


\begin{thebibliography}{30}

\bibitem{Banaschewski(1997)2}
B.~Banaschewski, ``The real numbers in pointfree topology,'' \emph{Textos de
  Mathematica (Series B)}, vol.~12, pp. 1--96, 1997.

\bibitem{Berberian1972}
S.~Berberian, \emph{Baer *-rings}.\hskip 1em plus 0.5em minus 0.4em\relax
  Springer-Verlag, 1972.

\bibitem{Dube(2009)}
T.~Dube, ``Concerning $\text{P}$-frames, essential $\text{P}$-frames, and
  strongly zero-dimensional frames,'' \emph{Algebra Universalis}, vol.~61, pp.
  115--138, 2009.

\bibitem{estaji2010kasch}
A.~A. Estaji and M.~Henriksen, ``On a-kasch spaces,'' \emph{Archivum
  Mathematicum}, vol.~46, no.~4, pp. 251--262, 2010.

\bibitem{EstajiKaramzadeh2003}
A.~A. Estaji and O.~A.~S. Karamzadeh, ``On \text{C (X)} modulo its socle,''
  \emph{Communications in Algebra}, vol. 31:4, pp. 1561--1571, 2003.

\bibitem{EstajiKarimiZarghani(2016)2}
A.~A. Estaji, A.~KarimiFeizabadi, and M.~Zarghani, ``Zero elements and
  $z$-ideals in modified pointfree topology,'' \emph{Bull. Iranian Math. Soc.,
  Accepted}.

\bibitem{estaji2016ring}
A.~A. Estaji, A.~KarimiFeizabadi, and M.~Zarghani, ``The ring of
  real-continuous functions on a topoframe,'' \emph{Categories and General
  Algebraic Structures with Applications}, vol.~4, no.~1, pp. 75--94, 2016.

\bibitem{EstajiHashemiEstaji2017}
A.~A. Estaji, E.~Hashemi, and A.~A. Estaji, ``On property ($\text{A}$) and
  socle of the $f$-ring $\text{Frm(P(R ), L)}$,'' \emph{Categories and General
  Algebraic Structures with Applications, Accepted}.

\bibitem{GillmanJerison1976}
L.~Gillman and M.~Jerison, \emph{Rings of continuous functions}.\hskip 1em plus
  0.5em minus 0.4em\relax Springer-Verlag, 1976.

\bibitem{Goodearl1979}
K.~R. Goodearl, \emph{Von Neumann regular rings}.\hskip 1em plus 0.5em minus
  0.4em\relax San Fracisco: Pitman, 1979.

\bibitem{Karamzadeh1997}
O.~A.~S. Karamzadeh, ``On a question of matlis,'' \emph{Comm. Algebra},
  vol.~25, pp. 2717--2726, 1997.

\bibitem{Karamzadeh1999}
O.~A.~S. Karamzadeh and A.~A. Koochakpour, ``On $\aleph_0$-selfinjectivity of
  strongly regular rings,'' \emph{Comm. Algebra}, vol.~27, pp. 1501--1513,
  1999.

\bibitem{KarimiEstajiZarghani}
A.~KarimiFeizabadi, A.~A. Estaji, and M.~Zarghani, ``The ring of real-valued
  functions on a frame,'' \emph{Categories and General Algebraic Structures
  with Applications}, vol. 5, Number 1, pp. 85--102, July 2016.

\bibitem{Matlis1983}
E.~Matlis, ``The minimal prime spectrum of a reduced ring,'' \emph{Illinois J.
  Math.}, vol.~27, pp. 353--391, 1983.

\bibitem{PicadoPultre}
J.~Picado and A.~Pultre, \emph{Frames and Locales: Topology without
  points}.\hskip 1em plus 0.5em minus 0.4em\relax Springer Basel: Frontiers in
  Mathematics, 2012.

\bibitem{SmithTercan1993}
P.~F. Smith and A.~Tercan, ``Generalizations of \text{CS}-modules,''
  \emph{Comm. Algebra}, vol.~21, pp. 1809--1847, 1993.

\bibitem{zarghani2017ring}
M.~Zarghani, ``The ring of real-continuous functions on a topoframe,'' Ph.D.
  dissertation, Hakim Sabzevari University, 2017.

\bibitem{ZarghaniKarimi(2016)}
M.~Zarghani and A.~KarimiFeizabadi, ``Zero elements in lattice theory,''
  \emph{Extended Abstracts of the $25^{th}$ Iranian Algebra Seminar July 20-21,
  2016, Hakim Sabzevari University, Sabzevar, Iran}.

\end{thebibliography}
\end{document}